\newtheorem{theorem} {{\textsf{Theorem}}}
\newtheorem{proposition}[theorem]{{\textsf{Proposition}}}
\newtheorem{corollary}[theorem]{{\textsf{Corollary}}}
\newtheorem{definition}[theorem]{{\textsf{Definition}}}
\newtheorem{remark}[theorem]{{\textsf{Remark}}}
\newtheorem{lemma}[theorem]{{\textsf{Lemma}}}
\newcommand{\xdownarrow}[1]{
	{\left\downarrow\vbox to #1{}\right.\kern-\nulldelimiterspace}
}
\begin{document} 
	
\title{Semi-equivelar gems of PL $d$-manifolds}
\author{Biplab Basak and Manisha Binjola$^1$}
	
\date{}
	
\maketitle
	
\vspace{-10mm}
\begin{center}
		
\noindent {\small Department of Mathematics, Indian Institute of Technology Delhi, New Delhi 110016, India.$^2$}

\footnotetext[1]{Corresponding author}
		
\footnotetext[2]{{\em E-mail addresses:} \url{biplab@iitd.ac.in} (B. Basak), \url{binjolamanisha@gmail.com} (M. Binjola).}
		
\medskip
		
\date{February 6, 2024}
\end{center}
	
\hrule
	
\begin{abstract}
We define the notion of $(p_0,p_1,\dots,p_d)$-type semi-equivelar gems for closed connected PL $d$-manifolds, related to the regular embedding of gems $\Gamma$ representing $M$ on a surface $S$ such that the face-cycles at all the vertices of $\Gamma$ on $S$ are of the same type. The term is inspired by semi-equivelar maps of surfaces. Given a surface $S$ having non-negative Euler characteristic, we find all regular embedding types on $S$ and then construct a genus-minimal semi-equivelar gem (if it exists) of each such type embedded on $S$. Moreover, we present constructions of the following semi-equivelar gems:
\begin{enumerate}[$(1)$]
\item For each closed connected surface $S$, we construct a genus-minimal semi-equivelar gem that represents $S$. In particular, for $S=\#_n (\mathbb{S}^1 \times \mathbb{S}^1)$ (resp., $\#_n(\mathbb{RP}^2)$), the semi-equivelar gem of type $((4n+2)^3)$ (resp., $((2n+2)^3)$) is constructed.
			
\item  For a closed connected orientable PL $d$-manifold $M$ (where $d \geq 3$) of regular genus at most $1$, we show that $M$ admits a genus-minimal semi-equivelar gem if and only if $M$ is a lens space.
\end{enumerate}
Moreover, if we consider semi-equivelar gems with $2$-gons then for a closed connected orientable $d$-manifold $M$  (where $d \geq 3$) with $\mathcal{G}(M)\leq 1$, $M$ admits a genus-minimal semi-equivelar gem (with bigons).
\end{abstract}
	
\noindent {\small {\em MSC 2020\,:} Primary 52C20; Secondary 52B70, 05C10, 05C15, 57Q15.
		
\noindent {\em Keywords:} Semi-equivelar maps, Graph encoded manifold, Semi-equivelar gems, Regular genus.}
	
\medskip
	
\section{Introduction}
	
A  map is an embedding of a connected graph on a surface.
For each vertex $u$, there is a cycle of polygonal faces $C_u$ with vertex at $u$. Two face cycles are of the same type if they are the same as an ordered sequence up to the cyclic shift. A map is called {\it semi-equivelar} if face-cycles are of the same type for all vertices. From \cite{bk08}, a map on a surface is called equivelar if there exist numbers $p,q$ such that every vertex is $q$-valent and every facet contains exactly $p$ vertices and edges. In other words, a semi-equivelar map is equivelar if the face cycle consists of similar polygonal faces at each vertex.
	
In \cite{mtu14}, a classification of semi-equivelar maps on surfaces of Euler characteristic $-1$ is given. In \cite{dm17} and \cite{dm18}, the authors discuss semi-equivelar maps on the torus and Klein bottle. In \cite{dm22},  semi-equivelar maps on the $2$-sphere are given.

A graph encoded manifold, in short, `gem', $(\Gamma,\gamma)$ of a connected compact PL-manifold is a certain kind of edge-colored graph that represents the manifold (see Subsection \ref{crystal}). It is known that every $(d+1)$-regular gem can be embedded on a surface regularly (see Subsection \ref{sec:genus}). The motivation behind the term semi-equivelar gems for PL $d$-manifold is taken from the semi-equivelar maps on surfaces.
	
Given a surface $S$ having non-negative Euler characteristic, we find all regular embedding types on $S$ and then construct a semi-equivelar gem (if it exists) of each such type embedded on $S$. Further, for each connected surface $K$, we construct a genus-minimal semi-equivelar gem that represents $K$. In particular, for $K=\#_n (\mathbb{S}^1 \times \mathbb{S}^1)$ (resp., $\#_n(\mathbb{RP}^2)$), the genus-minimal semi-equivelar gem of type $((4n+2)^3)$ (resp., $((2n+2)^3)$) is constructed. We also prove the necessary and sufficient condition for a closed connected orientable PL $d$-manifold $M$ $(d \geq 3)$ with regular genus at most $1$ to admit a genus-minimal semi-equivelar gem is that $M$ must be a lens space. To avoid any ambiguity, we note that if  $\Gamma$ is a semi-equivelar gem for a closed $d$-manifold $M$ then it means that $\Gamma$ represents $M$ and there exists a surface $S$ on which it embeds regularly such that the face-cycles at all the vertices of $\Gamma$ on $S$ are of the same type (see Definition \ref{def:semiequivelar}). We conclude the article by defining the term semi-equivelar gems (with bigons) and showing that for a closed connected orientable $d$-manifold $M$  $(d \geq 3)$ with $\mathcal{G}(M)\leq 1$, $M$ admits a genus-minimal semi-equivelar gem (with bigons).
	
\section{Preliminaries}
The Crystallization theory gives a combinatorial technique to represent any piecewise-linear (PL) manifold using edge-colored graphs.
	
\subsection{Graph encoded manifolds (gem)} \label{crystal}
	
For a multigraph $\Gamma= (V(\Gamma), E(\Gamma))$ without loops, edges are labeled (or colored) by $\Delta_d:=\{0,1, \dots, d\}$. The coloring is called a proper edge-coloring if every two adjacent edges have different colors. The members of the set $\Delta_d$ are called the {\it colors} of $\Gamma$. More precisely, for a proper edge-coloring, there exists a surjective map $\gamma : E(\Gamma) \to \Delta_d$ with  $\gamma(e_1) \ne \gamma(e_2)$ for any two adjacent edges $e_1$ and $e_2$. A  graph with a proper edge coloring is denoted by $(\Gamma,\gamma)$.
If the degree of each vertex in a graph $(\Gamma,\gamma)$ is  $d+1$ then it is said to be {\it $(d+1)$-regular}. We refer to \cite{bm08} for standard terminology on graphs. All spaces will be considered in the PL-category.

A  regular {\it $(d+1)$-colored graph} is a pair $(\Gamma,\gamma)$, where $\Gamma$ is $(d+1)$-regular and $\gamma$ is a proper edge-coloring. If there is no confusion with coloration,  $\Gamma$ can be used instead of $(\Gamma,\gamma)$ for $(d+1)$-colored graphs. For each $\mathcal{C} \subseteq \Delta_d$ with a cardinality of $q$, the graph $\Gamma_\mathcal{C} =(V(\Gamma), \gamma^{-1}(\mathcal{C}))$ is a $q$-colored graph with edge-coloring $\gamma|_{\gamma^{-1}(\mathcal{C})}$. For $\{j_1,j_2,\dots,j_q\} \subset \Delta_d$, $g(\Gamma_{\{j_1,j_2, \dots, j_q\}})$ or $g_{j_1j_2 \dots j_q}$ denotes the number of connected components of the graph $\Gamma_{\{j_1, j_2, \dots, j_q\}}$.  A graph $(\Gamma,\gamma)$ is called {\it contracted} if  subgraph $\Gamma_{\hat{i}}:=\Gamma_{\Delta_d\setminus i}$ is connected for all $i \in \Delta_d$

Let $\mathbb{G}_d$ denote the set of regular $(d+1)$-colored graphs. For each  $(\Gamma,\gamma) \in \mathbb{G}_d$, a corresponding $d$-dimensional simplicial cell-complex ${\mathcal K}(\Gamma)$ is constructed as follows:
	
\begin{itemize}
\item{} for each vertex $u\in V(\Gamma)$, take a $d$-simplex $\sigma(u)$ with vertices labeled by $\Delta_d$;
		
\item{} corresponding to each edge of color $j$ between $u,v\in V(\Gamma)$, identify the ($d-1$)-faces of $\sigma(u)$ and $\sigma(v)$ opposite to $j$-labeled vertices such that the same labeled vertices coincide.
\end{itemize}
If the geometric carrier $|\mathcal{K}(\Gamma)|$  is (PL) homeomorphic to a PL $d$-manifold $M$ then $\mathcal{K}(\Gamma)$ is said to be a pseudotriangulation of $M$, and $(\Gamma,\gamma)$ is said to be a gem (graph encoded manifold) of $M$ (or is said to represent $M$). It is a known fact that every PL $d$-manifold admits a gem. A $(d+1)$-colored gem of a closed connected (PL) $d$-manifold is called a {\em crystallization} if it is contracted. From the above construction, it can be visualized that $|\mathcal{K}(\Gamma)|$ is a closed connected PL $d$-manifold if and only if for each $c\in \Delta_d$, $\Gamma_{\hat{c}}$ represents $\mathbb{S}^{d-1}$.
	
Let $(\Gamma,\gamma)$ and $(\bar{\Gamma},\bar{\gamma})$   be two $(d+1)$-colored graphs with color set $\Delta_d$ and $\bar{\Delta}_d$ respectively. Then $I(\Gamma):=(I_V,I_c):\Gamma \to \bar{\Gamma}$ is called an {\em isomorphism} if $I_V: V(\Gamma) \to V(\bar{\Gamma})$ and $I_c:\Delta_d \to \bar{\Delta}_d$ are bijective maps such that $uv$ is an edge of color $i \in \Delta_d$ if and only if $I_V(u)I_V(v)$ is an edge of color $I_c(i) \in \bar{\Delta}_d$. The graphs $(\Gamma, \gamma)$ and $(\bar{\Gamma}, \bar{\gamma})$ are then said to be isomorphic.
	
\subsection{Regular Genus of closed PL $d$-manifolds}\label{sec:genus}

From \cite{g81}, it is known that if $(\Gamma,\gamma)\in \mathbb{G}_d$ is a bipartite (resp. non bipartite) $(d+1)$-regular colored graph which represents a closed connected PL $d$-manifold $M$ then for each cyclic permutation $\varepsilon=(\varepsilon_0,\dots,\varepsilon_d)$ of $\Delta_d$, there exists a regular embedding of $\Gamma$ into an orientable (resp. non orientable) surface $S$. A regular embedding is an embedding where each region is alternately bounded by a bi-colored cycle with colors $\varepsilon_i,\varepsilon_{i+1}$ for some $i$ (addition is modulo $d + 1$). Moreover, the Euler characteristic $\chi_\varepsilon(\Gamma)$ of the orientable (resp. non orientable) surface  $S$ satisfies
$$\chi_\varepsilon(\Gamma)=\sum_{i \in \mathbb{Z}_{d+1}}g_{\varepsilon_i\varepsilon_{i+1}} + (1-d)\frac{V(\Gamma)}{2},$$ 
and the genus (resp. half of genus) $\rho_ \varepsilon$ of $S$ satisfies
$$\rho_ \varepsilon(\Gamma)=1-\frac{\chi_\varepsilon(\Gamma)}{2}.$$
The regular genus $\rho(\Gamma)$ of $(\Gamma,\gamma)$ is defined as
$$\rho(\Gamma)= \min \{\rho_{\varepsilon}(\Gamma) \ | \  \varepsilon \ \mbox{ is a cyclic permutation of } \ \Delta_d\}.$$
The regular genus of $M$ is defined as 
$$\mathcal G(M) = \min \{\rho(\Gamma) \ | \  (\Gamma,\gamma)\in \mathbb{G}_d \mbox{ represents } M\}.$$

A  map is an embedding of a connected graph on a surface, where faces are polygons. For a vertex $u$ of a map $X$ on a surface, the faces containing $u$ form a cycle $C_u=P_0, P_1, \dots, P_k, P_0$ in the dual graph of $X.$ If for any two vertices $u$ and $v$, the cycles of faces $C_u,C_v$ are of the same type then the map $X$ is called semi-equivelar. For $p_0,p_1,\dots, p_k\geq 3$, if $C_u=P_0, P_1, \dots, P_k, P_0$ where $P_j$ is a $p_i$-gon then the semi-equivelar map $X$ is denoted by $(p_0, p_1, \dots, p_k)$.  If $n_i$ number of adjacent faces are $p_i$ polygons, then $X$ can be written as $(p_0^{n_0},p_1^{n_1},\dots,p_m^{n_m})$, $n_0, \dots , n_m \geq 1$. 
	
\begin{proposition}[\cite{dm17}]\label{S=0}
Let $X$ be a semi-equivelar map on a surface $S$. If $\chi(S) = 0$ then $X$ is of type $(3^6)$, $(3^4, 6^1)$, $(3^3, 4^2)$, $(3^2, 4^1, 3^1, 4^1)$, $(4^4)$, $(3^1, 6^1, 3^1, 6^1)$, $(3^2, 6^2)$, $(3^2, 4^1, 12^1)$, $(3^1, 4^1, 3^1, 12^1)$, \, $(3^1, 4^1, 6^1, 4^1)$, \, $(3^1, 4^2, 6^1)$,  \, $(6^3)$, \, $(3^1, 12^2)$, \, $(4^1, 8^2)$, \, $(5^2, 10^1)$, $(3^1, 7^1, 42^1)$, $(3^1, 8^1, 24^1)$, $(3^1, 9^1, 18^1)$, $(3^1, 10^1, 15^1)$, $(4^1, 5^1, 20^1)$ or $(4^1, 6^1, 12^1).$
\end{proposition}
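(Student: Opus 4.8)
The plan is to reduce the whole classification to a single local equation coming from the vanishing Euler characteristic, and then to solve that equation by an exhaustive but elementary enumeration. Write the type of $X$ as $(f_0^{n_0},\dots,f_m^{n_m})$ with distinct $f_i\ge 3$ and $n_i\ge 1$, and let $p=\sum_i n_i$ be the common vertex degree, which equals the number of faces in each face-cycle $C_u$. First I would double-count incidences. Every vertex has degree $p$, so $2E=pV$; and since each vertex meets exactly $n_i$ faces that are $f_i$-gons, counting (vertex, $f_i$-gon)-incidences two ways shows that the number of $f_i$-gons is $n_iV/f_i$, whence $F=V\sum_i n_i/f_i$. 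Substituting into Euler's formula $V-E+F=\chi(S)=0$ and dividing by $V$ gives
\[
\sum_i \frac{n_i}{f_i}=\frac{p}{2}-1,
\qquad\text{equivalently}\qquad
\sum_i n_i\Bigl(1-\frac{2}{f_i}\Bigr)=2. \tag{$\star$}
\]
Geometrically $(\star)$ just records that the interior angles $(1-2/f_i)\pi$ of the regular polygons around a vertex of a flat tiling sum to $2\pi$, but the derivation above is purely combinatorial and uses only $\chi(S)=0$.

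The second step is to bound the degree $p$. For every $f_i\ge 3$ one has $\tfrac13\le 1-\tfrac{2}{f_i}<1$, so the left-hand side of $(\star)$ lies in the interval $[p/3,\,p)$; since it equals $2$, this forces $3\le p\le 6$. Listing the faces around a vertex with multiplicity as $f_1,\dots,f_p$, equation $(\star)$ reads $\sum_{j=1}^{p}1/f_j=(p-2)/2$, i.e. the reciprocal sum must be $\tfrac12,\,1,\,\tfrac32,\,2$ for $p=3,4,5,6$ respectively.

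The third step is the enumeration. For each $p$ I would solve the corresponding reciprocal Diophantine equation over integers $\ge 3$, ordering $f_1\le\dots\le f_p$ and bounding the smallest entries so the search is finite (for example, when $p=3$ the smallest face size lies in $\{3,4,5,6\}$). This yields the single multiset $\{3^6\}$ for $p=6$; the multisets $\{3^4,6\}$ and $\{3^3,4^2\}$ for $p=5$; $\{4^4\},\{3^2,6^2\},\{3,4^2,6\},\{3^2,4,12\}$ for $p=4$; and the ten triples $(6,6,6),(4,8,8),(5,5,10),(3,12,12),(4,6,12),(4,5,20),(3,10,15),(3,9,18),(3,8,24),(3,7,42)$ for $p=3$. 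For each multiset I would then list the distinct cyclic arrangements of the face-cycle up to rotation and reflection: this arrangement is unique when $p\in\{3,6\}$, while for $p\in\{4,5\}$ a few multisets admit two inequivalent arrangements (for instance $\{3^3,4^2\}$ gives both $(3^3,4^2)$ and $(3^2,4^1,3^1,4^1)$, and $\{3^2,6^2\}$ gives both $(3^2,6^2)$ and $(3^1,6^1,3^1,6^1)$). Assembling these produces exactly the $1+3+7+10=21$ types in the statement.

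The main obstacle is making the third step genuinely exhaustive and non-redundant: one must verify that the four reciprocal equations have no further integer solutions (the bound on the smallest face sizes is what turns this into a finite check) and must count cyclic arrangements up to the dihedral symmetry of the face-cycle without omission or duplication, which is precisely where the distinct types sharing a common multiset arise. I would also stress that $(\star)$ is only a necessary condition, so every candidate it admits must be recorded; whether each listed type is actually realizable as a map on the torus or Klein bottle is a separate question that is not needed for this direction of the statement.
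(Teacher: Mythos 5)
Your proposal is correct and complete for the stated (necessity) direction: the double count giving $\sum_i n_i\bigl(1-\tfrac{2}{f_i}\bigr)=2$, the bound $3\le p\le 6$, the finite reciprocal-sum enumeration, and the count of cyclic arrangements up to dihedral symmetry do yield exactly the $21$ listed types, and you rightly observe that realizability of each type on the torus or Klein bottle is a separate matter not needed here. Note that the paper itself offers no proof — it quotes the result from \cite{dm17} — and your argument is essentially the standard derivation used in that reference, so there is nothing to flag.
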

	
\section{Main results}
	
For a $(d+1)$-colored gem $\Gamma$, it is known to have the existence of a surface $S$ in which $\Gamma$ can be embedded regularly.  For each vertex $x$ of $\Gamma$, there is a cycle of faces $P_0, P_1, \dots, P_d$, where each $P_i$ is a polygon of length at least $4$ and its edges are colored by two alternate colors.      
\begin{definition}\label{def:semiequivelar}
{\rm
Let $\Gamma$ be a gem that represents a closed PL $d$-manifold $M$. Let $\Gamma$ embed regularly on a surface $S$. If the face-cycles  $C=P_0, P_1, \dots, P_d$ at all the vertices in the embedding of $\Gamma$ on the surface $S$ are of the same type then $\Gamma$ is called {\it semi-equivelar}  gem of $M$, and we say that $M$ admits a semi-equivelar gem. 
			
If $P_i$ is  $p_i$-polygon for $i \in \{ 0,1,\dots,d\}$ then $\Gamma$ is called  { \it $(p_0,p_1,\dots,p_d)$-type semi-equivelar} gem. If there are $n_i$ number of adjacent $p_i$-polygons then $\Gamma$ is called {\it $(p_0^{n_0},p_1^{n_1},\dots,p_m^{n_m})$-type semi-equivelar} gem.}
\end{definition}
In the article, we address two primary problems. Initially, we explore all regular embedding types (if any) on a surface with non-negative Euler characteristics, seeking instances where a semi-equivelar gem of the specified type exists. Subsequently, we inquire whether it is feasible to get a semi-equivelar gem for a given closed manifold $M$.

\begin{proposition}\label{prop:dattasir}
If $\Gamma$ is a semi-equivelar gem embedded regularly on a surface $S$ with $\chi(S)=0$ then $\Gamma$ is of type $(4^4)$, $(6^3)$, $(8^2,4^1)$ or $(4,6,12)$.
\end{proposition}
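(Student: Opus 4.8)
The plan is to recognize that the regular embedding of the semi-equivelar gem $\Gamma$ on $S$ is, by Definition \ref{def:semiequivelar}, itself a semi-equivelar map on $S$: its faces are polygons and the face-cycle type $C=F_0,F_1,\dots,F_d$ is the same at every vertex. Hence, once we know $\chi(S)=0$, the type of $\Gamma$ must appear among the twenty-one types listed in Proposition \ref{S=0}. The whole argument then reduces to deciding which of those types can actually occur for a \emph{regular} embedding, and discarding the rest.

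The key constraint comes from regularity. By definition, in a regular embedding every region is bounded by a cycle whose edges alternate between two colors in an alternate way. A cycle alternating strictly between two colors $\{i,j\}$ cannot have odd length, since closing the cycle up would force two consecutive edges of the same color; hence every face of a regular embedding has even length. Consequently, in the type $(f_0^{n_0},\dots,f_m^{n_m})$ of a semi-equivelar gem, each face-size $f_i$ must be even.

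It then remains to intersect the two conditions. Scanning the list in Proposition \ref{S=0}, I would eliminate every type containing an odd entry, which removes all types involving a $3$-gon, a $5$-gon, a $7$-gon, a $9$-gon, or a $15$-gon. The only types surviving with all entries even are $(4^4)$, $(6^3)$, $(4^1,8^2)$ and $(4^1,6^1,12^1)$; rewriting the last two as $(8^2,4^1)$ and $(4,6,12)$ yields exactly the four types asserted in the statement.

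I expect no genuine obstacle here beyond the bookkeeping. The real content of the proposition is the single observation that a regular embedding admits only even-length faces, which prunes the Datta--Maity classification of Proposition \ref{S=0} down to four candidates. The only point requiring care is to confirm that the type of the embedded gem really is one of the listed semi-equivelar map types, i.e.\ that a semi-equivelar gem on $S$ is in particular a semi-equivelar map on $S$, which is immediate from Definition \ref{def:semiequivelar}.
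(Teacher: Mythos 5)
Your proposal is correct and is essentially the paper's own argument: the authors likewise observe that a regular embedding forces all bi-colored face cycles to have even length, and then read off from Proposition~\ref{S=0} the only all-even types, namely $(4^4)$, $(6^3)$, $(8^2,4^1)$ and $(4,6,12)$. Your justification that an alternating two-colored cycle must have even length, and your explicit pruning of the twenty-one types, match the paper's (more tersely stated) reasoning exactly.
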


\begin{proof}
Let  $\Gamma$ be a semi-equivelar gem. Since a regular embedding implies the even length of polygonal faces, Proposition \ref{S=0}  gives the required result.
\end{proof}
	
\begin{lemma}\label{lemma:possibletypes}
Let $\Gamma$ be a semi-equivelar $(d+1)$-colored gem of order $p$ which embeds regularly on a surface $S$ such that $\chi(S)\geq 1$. Then the possible embedding types of $\Gamma$ on $S$ are: $(4^3)$, $ (4,6,8)$, $(4,6,10)$, $(6^2,4^1)$,  $(4^2,q)$, $q \geq 4$ is  even. Also, the values of $p$ are as follows:
$$\begin{matrix}
Type & (4^3)& (4,6,8) &(4,6,10)& (6^2,4^1)&  (4^2,q)\\
\mbox{ if }\chi(S)=1&4&24&60 &12&q\\
\mbox{ if }\chi(S)=2&8 &48& 120 &24&2q
\end{matrix}$$
\end{lemma}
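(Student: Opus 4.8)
The plan is to treat the regular embedding of $\Gamma$ on $S$ as a semi-equivelar map and distill a single Euler-characteristic identity relating $p=|V(\Gamma)|$, the dimension $d$, and the face lengths, then read off all admissible types from a reciprocal-sum inequality. I would first assemble the three ingredients of the count. Since $\Gamma$ is $(d+1)$-regular, $|E(\Gamma)|=(d+1)p/2$. Because the embedding is regular, every region is a bicolored cycle whose edges alternate between two colors, so its length is even; discarding $2$-gons as agreed, each face is an $f_i$-gon with $f_i\geq 4$ even. The semi-equivelar hypothesis says every vertex carries a face-cycle of the same type $(f_0^{n_0},\dots,f_m^{n_m})$ with $\sum_i n_i=d+1$, since there are exactly $d+1$ faces $F_0,\dots,F_d$ around each vertex, one for each cyclically consecutive pair of colors of $\varepsilon$. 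Counting vertex--face incidences two ways, for the number $F_i$ of $f_i$-gons one gets $f_i F_i=n_i p$, so the total number of faces is $F=p\sum_i n_i/f_i$.

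Substituting into $\chi(S)=p-|E(\Gamma)|+F$ yields
$$\frac{\chi(S)}{p}=1-\frac{d+1}{2}+\sum_{i}\frac{n_i}{f_i}=1-\sum_i n_i\Big(\frac12-\frac1{f_i}\Big).$$
Now I would exploit $\chi(S)\geq 1>0$ together with $f_i\geq 4$: each term satisfies $\frac12-\frac1{f_i}\geq\frac14$, so $\frac14(d+1)\leq\sum_i n_i(\frac12-\frac1{f_i})<1$, forcing $d+1\leq 3$. Since the case $d=1$ is trivial ($M=\mathbb{S}^1$), this pins down $d=2$, and the type is a triple $(f_0,f_1,f_2)$ of even integers $\geq 4$ for which positivity of $\chi$ reads exactly
$$\frac1{f_0}+\frac1{f_1}+\frac1{f_2}>\frac12.$$

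The remaining step is the finite enumeration of such triples. Ordering $f_0\leq f_1\leq f_2$: if $f_0=f_1=4$ then every even $f_2=q\geq 4$ works, giving $(4^2,q)$ (with $q=4$ recorded separately as $(4^3)$); if $(f_0,f_1)=(4,6)$ the inequality forces $f_2<12$, i.e.\ $f_2\in\{6,8,10\}$, producing $(6^2,4^1)$, $(4,6,8)$, $(4,6,10)$; and any choice with $f_0\geq 6$, or with $f_0=4,\ f_1\geq 8$, drives the reciprocal sum to at most $\frac12$ and is excluded, these borderline equalities being precisely the $\chi=0$ types $(6^3)$ and $(4,6,12)$ of Proposition \ref{prop:dattasir}. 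This leaves exactly the five listed families. Finally, for each surviving type I would substitute its reciprocal sum back into the displayed identity and solve for $p$: e.g.\ $(4^3)$ gives $\chi/p=1/4$, hence $p=4\chi(S)$, while the analogous computations give the denominators $24,60,12,q$, yielding the tabulated values of $p$ for $\chi(S)=1$ and $\chi(S)=2$.

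The only genuinely delicate part—and the step I would write out most carefully—is the face-counting/Euler computation: justifying that every face has even length $f_i\geq 4$ (regularity of the embedding plus the standing convention to ignore $2$-gons) and that exactly $d+1$ faces meet each vertex. Everything downstream is a routine reciprocal-sum case check and a substitution, so no further obstacle is expected.
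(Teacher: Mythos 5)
Your proposal is correct and follows essentially the same route as the paper: both derive the identity $\chi(S)/p = 1-\tfrac{d+1}{2}+\sum_i n_i/f_i$ from the regular embedding, use $f_i \geq 4$ to force $d+1 \leq 3$ (you even dispatch the degenerate $d=1$ case more explicitly than the paper, which simply asserts $d'=3$), and then enumerate the even triples with $\tfrac{1}{f_0}+\tfrac{1}{f_1}+\tfrac{1}{f_2} > \tfrac12$ before solving for $p$. Your unified treatment $p = \chi(S)\cdot(\text{denominator})$ cleanly reproduces the paper's table, which the paper obtains by running $\chi(S)=1$ and $\chi(S)=2$ as separate cases.
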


\begin{proof}
Let $\Gamma$ be  $({p_0},{p_1},\dots,{p_d})$-type semi-equivelar gem embedded regularly on $S$. Since embedding is regular, $p_i$ is even. Let $p$ and $d^\prime$ be the number of vertices and degree of each vertex of $\Gamma$ respectively. Let $q_0,q_1, \dots, q_l$ be the polygons of different sizes, where $q_j=p_i$. Let $k_i$ be the number of $q_i$-polygons. Now, $\sum_{i=0}^l{k_i}=d+1=d^\prime$. Let $V, E$, and $F$ be the number of vertices, edges, and faces of the embedding of $\Gamma$ respectively. Then $V=p$, $E=p d^\prime/2$ and $F=p(\frac{k_0}{q_0}+\frac{k_1}{q_1}+ \dots +\frac{k_l}{q_l})$. Thus, we have
\begin{align}
\Big(1-\frac{d^\prime}{2}+\frac{k_0}{q_0}+\frac{k_1}{q_1}+ \dots +\frac{k_l}{q_l}\Big)=\frac{\chi(S)}{p}. \label{1}
\end{align}
By assumption, $q_i \geq 4$. This gives $k_i/q_i \leq k_i/4$ which further implies
\begin{equation} \label{2}
d^\prime \leq 4-\frac{4 \chi(S)}{p}. 
\end{equation}
		
\noindent \textbf {Case 1.} Let $\chi(S)=1$. Equation \eqref{2} gives $d^\prime \leq 4-4/p$.  Since $p \geq 4$, $d^\prime \leq 3$. This implies  $d^\prime=3$. Therefore, $(k_0,k_1,\dots,k_j)=(3),(2,1)$ or $(1,1,1)$.
Also, Equation \eqref{1} reduces to 
\begin{align}\label{3}
\frac{k_0}{q_0}+\frac{k_1}{q_1}+ \dots +\frac{k_l}{q_l}=\frac{1}{p}+\frac{1}{2}.
\end{align}
Consider $(k_0)=(3)$. We get $q_0=4$. Otherwise, Since $q_0 \geq 6$, $3/q_0 \leq 1/2$. Then  Equation \eqref{1} implies $p \leq 0$, which is not possible. Therefore, $(q_0^{k_0})=(4^3)$ and $p=4$ from Equation \eqref{1}.
		
Now, consider $(k_0,k_1)=(2,1)$. Let us first consider $q_0 > q_1$. Equation \eqref{3} gives $1/p \leq 3/q_1-1/2$ which further gives $q_1=4$. Using Equation  \eqref{3} again, we get $q_0 \leq 6$. Since $q_0 \neq q_1$, $q_0=6$. Now, substituting $q_0$ in Equation \eqref{3}, we get $p=12$. Therefore, $(q_0^{k_0},q_1^{k_1})=(6^2,4^1)$. If we consider $q_1 > q_0$ then on the similar notes, we get  $(q_0^{k_0},q_1^{k_1})=(4^2,p^1)$.

Finally, we consider $(k_0,k_1,k_2)=(1,1,1)$. Without loss of generality, we assume $q_0<q_1<q_2.$ Now, Equation \eqref{3} excludes the values $q_0 \geq 6$, $q_1 \geq 8$ and $q_2 \geq 12$ on the similar lines as in the last paragraph. Therefore, $(q_0^{k_0},q_1^{k_1},q_2^{k_2})=(4,6,8)$ or $(4,6,10)$ and number of vertices of  $\Gamma$ is $24$ or $60$ respectively.
		
\smallskip
		
\noindent \textbf{Case 2.} Let $\chi(S)=2$. By the same arguments as above, we get $(i)$ $(q_0^{k_0},p)=(4^3,8)$, for $k_0=3$, $(ii)$ $(q_0^{k_0},q_1^{k_1},p)=(6^2,4,24)$ or $(4^2,p/2,p)$ for $(k_0,k_1)=(2,1)$ and $(iii)$ $(q_0^{k_0},q_1^{k_1},q_2^{k_2},p)=(4,6,8,48)$ or $(4,6,10,120)$ for $(k_0,k_1,k_2)=(1,1,1)$. Thus, the result follows.
\end{proof}
	
\begin{lemma} \label{newlemma:4^4}
The $(4^4)$-type semi-equivelar gems which can be embedded regularly on $\mathbb{S}^1 \times \mathbb{S}^1$ represent the $3$-sphere $\mathbb{S}^3$ or a lens space $L(p,q)$ for some  co-prime integers $p>q\geq 1$.
\end{lemma}
	
\begin{proof}
We first find the possible $(4^4)$-type semi-equivelar gems $\Gamma$ that embed regularly on $\mathbb{S}^1 \times \mathbb{S}^1$. We assume that the $01$-, $12$-, $23$-, and $03$-colored cycles are of length $4$. Let there be $k$ number of $02$-colored cycles. Let $u_1, u_2, \dots, u_{2m}$ be the vertices of a $02$-colored cycle, for some $m$. Since each component of $\Gamma_{\{0,1,2\}}$ represents a $2$-sphere, $\Gamma_{\{0,1,2\}}$ is bipartite. If possible, let $u_1u_{2t}$ be an edge of color $1$, for some $t\leq m$. Since $01$- and $12$-colored cycles are of length $4$, $u_2u_{2t-1}, u_3u_{2t-2}, \dots, u_tu_{t+1}$ are edges of color $1$. Then $\Gamma_{\{0,1,2\}}$ has a $01$- or $12$-colored cycle of length $2$ with vertices $u_t$ and $u_{t+1}$. This is a contradiction. Therefore, no two vertices of the same $02$-colored cycle are joined by a $1$-colored edge. By similar arguments, we can say that no two vertices of the same $02$-colored cycle are joined by a $3$-colored edge. Let $v_1, v_2, \dots, v_{2m'}$ be the vertices of another $02$-colored cycle, for some $m'$, such that $u_1v_1$ is an edge of color $1$. Since $01$- and $12$-cycles are of length $4$, $m=m'$, and $u_iv_i$ is also an edge of color $1$, for $1\leq i \leq 2m$. A similar argument is true for an edge of color $3$ as well. Therefore, the length of each $02$-cycle is the same, and $k$ is even.

Let $2s$ be the length of $02$-cycles. We enumerate these cycles and their vertices. Let $C_l,$ $1 \leq l \leq k$, be $02$-colored cycles, and $\{u_{i}^l \mid 1 \leq i \leq 2s\}$ be the vertex-set of the $C_l$ cycle. Assume that the edge $u^l_{i}u^l_{i+1}$ is colored 0 for odd $i$ and colored 2 for even $i$, considering $u^l_{2s+1}=u^l_1$. Without loss of generality, up to isomorphism, we assume that $u_1^1u_1^2$ is an edge labeled by color $1$. Since $01$- and $12$-cycles are of length $4$, $u^1_ju^2_j$ is an edge labeled by color $1$, $\forall$ $j \in \{1, \dots, 2s\}$. Similarly, since $23$- and $03$-cycles are of length $4$, $u^2_ju^3_j$ is an edge labeled by color $3$, $\forall$ $j \in \{1, \dots, 2s\}$. Continuing in this way, we get that $u^l_ju^{l+1}_j$ is an edge labeled by color $1$ (resp. color $3$) for $l$ odd (resp. $l$ even), $1 \leq j \leq 2s$, $1 \leq l < k$. Therefore, each vertex in $C_1$ and $C_k$ is adjacent to a $c$-colored edge, where $c \neq 3$. Since $\Gamma$ embeds regularly on $\mathbb{S}^1 \times \mathbb{S}^1$, $\Gamma$ is bipartite. Therefore, $u_1^k$ is joined to $u_t^1$ by a $3$-colored edge, where $t$ is odd and $1 \leq t \leq 2s$. If $u_1^k$ is joined to $u_{1+2r}^1$ by a $3$-colored edge, where $0\leq r \leq s-1$, then the fact that `$23$- and $03$-colored cycles are of length $4$' implies that $u_j^k$ is joined to $u^1_{j+2r}$ (addition in the subscript is modulo $2s$) by a $3$-colored edge, where $1\leq j \leq s$. Thus, we have at most $s$ number of $(4^4)$-type semi-equivelar gems $\Gamma$ up to isomorphism (see Figure \ref{fig:L(p,q)}).

Now, let us find the $3$-manifolds $|\mathcal{K}(\Gamma)|$ corresponding to these $(4^4)$-type semi-equivelar gems $\Gamma$. Let $C_{ij}$ denote the subgraph of $\Gamma$ consisting of  $02$-colored cycles $C_i$, $C_j$, and the edges joining the vertices of these cycles. For an example, $C_{12}$ denotes  $02$-colored cycles $C_1$, $C_2$ and the $1$-colored edges $u^1_ju^2_j$ for each $j$,  $C_{23}$ denotes $C_2$, $C_3$ and the $3$-colored edges $u^2_ju^3_j$ for each $j$, and so on. Each component $\Gamma_{\hat{3}}$ of $\Gamma$ represents subcomplex of a 3-ball in $\mathcal{K}(\Gamma)$. In other words, $C_{12},C_{34},\dots, C_{k-1k}$ represent subcomplexes of 3-balls in $\mathcal{K}(\Gamma)$. For each even $l$, $ 1 \leq l \leq k-1$, the $3$-colored edges between $C_l$ and $C_{l+1}$ identify the half portions of boundaries of two $3$-balls (represented by $C_{l-1\, l}$ and $C_{l+1\, l+2}$) and thus giving a $3$-ball. Therefore, the union of $C_{12},C_{23},\dots ,C_{l\,l+1} , \dots, C_{k-1\,k}$ denoted by $\mathcal{B}$ represents a $3$-ball. Now, adding the $3$-colored edges  $u^k_ju^1_{j+2r}$ to $\mathcal{B}$ forms $\Gamma$. If $u^1_j$ is joined to $u^k_{j}$, then $\Gamma$  represents $3$-sphere $\mathbb{S}^3$.  If $u^k_j$ is joined to $u^1_{j+2r}$ (addition in the subscript is of modulo $2s$) for some $1\leq r < s$, then $\Gamma$ represents lens space $L(p,q)$, for some co-prime integers $p>q\geq 1$, where $\frac{s}{p}=\frac{r}{q}$. To see this, consider $\frac{s}{p}=\frac{r}{q}=d$, where $gcd(s,r)=d$. Let $P$ be a 3-dimensional polytope (in $\mathbb{R}^3$) whose vertices are $u^{\pm}=(0,0, \pm 1)$, $v_i= (\cos \frac{2 i \pi}{p}, \sin\frac{2 i \pi}{p},0)$, $0\leq i\leq p-1$, and the boundary of $P$ consists of $2p$ triangles $\alpha^{\pm}_i :=$ convex hull of $\{u^{\pm}, v_i, v_{i+1}\}$ (here summations in the subscripts are modulo $p$). Introducing $2d-1$ vertices along each edge $v_i v_{i+1}$ and connecting them with $u^{\pm}$ yields a new 3-dimensional polytope $P'$ with a boundary isomorphic to $\partial(|\mathcal{K}(\mathcal{B})|)$. Therefore, there is a homeomorphism from $|\mathcal{K}(\mathcal{B})|$ to the polytope $P$, which sends $2d$ triangles of $\partial (|\mathcal{K}(\mathcal{B})|)$ to a single triangle of $\partial(P)$, and adding the $3$-colored edges  $u^k_ju^1_{j+2r}$ to $\mathcal{B}$ identifies $\alpha^{+}_i$ with $\alpha^{-}_{i+q}$ in $P$, where $u^{+}, v_i, v_{i+1}$ get identified with $u^{-}, v_{i+q}, v_{i+1+q}$, respectively. Therefore, the gem $\Gamma$ represents $L(p, q)$. This completes the proof.
\end{proof}

 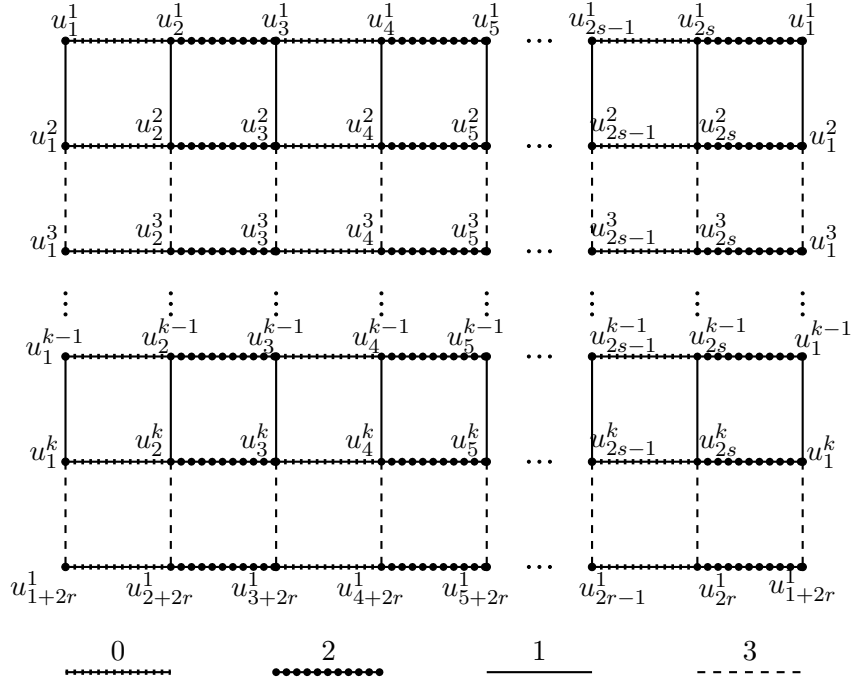
\begin{figure}[ht]
\tikzstyle{ver}=[]
\tikzstyle{vertex}=[circle, draw, fill=black!100, inner sep=0pt, minimum width=4pt]
\tikzstyle{edge} = [draw,thick,-]
\centering
\begin{tikzpicture}[scale=1.4]

\begin{scope}[shift={(1,1)}]

\draw [line width=2pt, line cap=rectengle, dash pattern=on 1pt off 2] (1,1) -- (2,1);
\path[edge] (1,1) -- (2,1);

\draw [line width=3pt, line cap=round, dash pattern=on 0pt off 1.3\pgflinewidth] (2,1) -- (3,1);
\path[edge] (2,1) -- (3,1);

\draw [line width=2pt, line cap=rectengle, dash pattern=on 1pt off 2] (3,1) -- (4,1);
\path[edge] (3,1) -- (4,1);

\draw [line width=3pt, line cap=round, dash pattern=on 0pt off 1.3\pgflinewidth] (4,1) -- (5,1);
\path[edge] (4,1) -- (5,1);

\filldraw (5.4,1) circle(0.4 pt); 
\filldraw (5.5,1) circle(0.4 pt); 
\filldraw (5.6,1) circle(0.4 pt); 

\draw [line width=2pt, line cap=rectengle, dash pattern=on 1pt off 2] (6,1) -- (7,1);
\path[edge] (6,1) -- (7,1);
\draw [line width=3pt, line cap=round, dash pattern=on 0pt off 1.3\pgflinewidth] (7,1) -- (8,1);
\path[edge] (7,1) -- (8,1);
\path[edge] (1,1) -- (1,0);
\path[edge] (2,1) -- (2,0);
\path[edge] (3,1) -- (3,0);
\path[edge] (4,1) -- (4,0);
\path[edge] (5,1) -- (5,0);
\path[edge] (7,1) -- (7,0);
\path[edge] (8,1) -- (8,0);
\path[edge] (6,1) -- (6,0);
\filldraw[black] (1,1) circle (1pt);
\filldraw[black] (3,1) circle (1pt);
\filldraw[black] (5,1) circle (1pt);
\filldraw[black] (6,1) circle (1pt);
\filldraw[black] (8,1) circle (1pt);
\filldraw[black] (1,0) circle (1pt);
\filldraw[black] (3,0) circle (1pt);
\filldraw[black] (5,0) circle (1pt);
\filldraw[black] (6,0) circle (1pt);
\filldraw[black] (8,0) circle (1pt);

\node (a1) at (1,1.2){$u_1^1$}; 
\node (a2) at (2,1.21) {$u_2^1$};  
\node (a3) at (3,1.21)  {$u_3^1$};  
\node (a4) at (4,1.21)  {$u_4^1$};  
\node (a5) at (5,1.21)  {$u_5^1$};  
\node (a6) at (6.1,1.21) {$u_{2s-1}^1$};  
\node (a7) at (7,1.21)  {$u_{2s}^1$};  
\node (a8) at (8,1.21)  {$u_1^1$};  

\node (a1) at (0.8,0.1){$u_1^2$}; 
\node (a2) at (1.8,0.21) {$u_2^2$};  
\node (a3) at (2.8,0.21)  {$u_3^2$};  
\node (a4) at (3.8,0.21)  {$u_4^2$};  
\node (a5) at (4.8,0.21)  {$u_5^2$};  
\node (a6) at (6.3,0.21) {$u_{2s-1}^2$};  
\node (a7) at (7.2,0.21)  {$u_{2s}^2$};  
\node (a8) at (8.2,0.1)  {$u_1^2$};  
   
\end{scope}

\begin{scope}[shift={(1,0)}]

\draw [line width=2pt, line cap=rectengle, dash pattern=on 1pt off 2] (1,1) -- (2,1);
\path[edge] (1,1) -- (2,1);

\draw [line width=3pt, line cap=round, dash pattern=on 0pt off 1.3\pgflinewidth] (2,1) -- (3,1);
\path[edge] (2,1) -- (3,1);

\draw [line width=2pt, line cap=rectengle, dash pattern=on 1pt off 2] (3,1) -- (4,1);
\path[edge] (3,1) -- (4,1);

\draw [line width=3pt, line cap=round, dash pattern=on 0pt off 1.3\pgflinewidth] (4,1) -- (5,1);
\path[edge] (4,1) -- (5,1);

\filldraw (5.4,1) circle(0.4 pt); 
\filldraw (5.5,1) circle(0.4 pt); 
\filldraw (5.6,1) circle(0.4 pt); 
\draw [line width=2pt, line cap=rectengle, dash pattern=on 1pt off 2] (6,1) -- (7,1);
\path[edge] (6,1) -- (7,1);
\draw [line width=3pt, line cap=round, dash pattern=on 0pt off 1.3\pgflinewidth] (7,1) -- (8,1);
\path[edge] (7,1) -- (8,1);
\path[edge,dashed] (1,1) -- (1,0);
\path[edge,dashed] (2,1) -- (2,0);
\path[edge,dashed] (3,1) -- (3,0);
\path[edge,dashed] (4,1) -- (4,0);
\path[edge,dashed] (5,1) -- (5,0);
\path[edge,dashed] (7,1) -- (7,0);
\path[edge,dashed] (8,1) -- (8,0);
\path[edge,dashed] (6,1) -- (6,0);

 \filldraw[black] (1,0) circle (1pt);
\filldraw[black] (3,0) circle (1pt);
\filldraw[black] (5,0) circle (1pt);
\filldraw[black] (6,0) circle (1pt);
\filldraw[black] (8,0) circle (1pt);
\node (a1) at (0.8,0.1){$u_1^3$}; 
\node (a2) at (1.8,0.21) {$u_2^3$};  
\node (a3) at (2.8,0.21)  {$u_3^3$};  
\node (a4) at (3.8,0.21)  {$u_4^3$};  
\node (a5) at (4.8,0.21)  {$u_5^3$};  
\node (a6) at (6.3,0.21) {$u_{2s-1}^3$};  
\node (a7) at (7.2,0.21)  {$u_{2s}^3$};  
\node (a8) at (8.2,0.1)  {$u_1^3$};

\end{scope}

\begin{scope}[shift={(1,-1)}]

\draw [line width=2pt, line cap=rectengle, dash pattern=on 1pt off 2] (1,1) -- (2,1);
\path[edge] (1,1) -- (2,1);

\draw [line width=3pt, line cap=round, dash pattern=on 0pt off 1.3\pgflinewidth] (2,1) -- (3,1);
\path[edge] (2,1) -- (3,1);

\draw [line width=2pt, line cap=rectengle, dash pattern=on 1pt off 2] (3,1) -- (4,1);
\path[edge] (3,1) -- (4,1);

\draw [line width=3pt, line cap=round, dash pattern=on 0pt off 1.3\pgflinewidth] (4,1) -- (5,1);
\path[edge] (4,1) -- (5,1);

\filldraw (5.4,1) circle(0.4 pt); 
\filldraw (5.5,1) circle(0.4 pt); 
\filldraw (5.6,1) circle(0.4 pt); 
\draw [line width=2pt, line cap=rectengle, dash pattern=on 1pt off 2] (6,1) -- (7,1);
\path[edge] (6,1) -- (7,1);
\draw [line width=3pt, line cap=round, dash pattern=on 0pt off 1.3\pgflinewidth] (7,1) -- (8,1);
\path[edge] (7,1) -- (8,1);
\filldraw (1,0.6) circle(0.4 pt); 
\filldraw (1,0.4) circle(0.4 pt); 
\filldraw (1,0.5) circle(0.4 pt);

\filldraw (2,0.6) circle(0.4 pt); 
\filldraw (2,0.4) circle(0.4 pt); 
\filldraw (2,0.5) circle(0.4 pt); 

\filldraw (3,0.6) circle(0.4 pt); 
\filldraw (3,0.4) circle(0.4 pt); 
\filldraw (3,0.5) circle(0.4 pt); 
 
\filldraw (4,0.6) circle(0.4 pt); 
\filldraw (4,0.4) circle(0.4 pt); 
\filldraw (4,0.5) circle(0.4 pt); 

\filldraw (5,0.6) circle(0.4 pt); 
\filldraw (5,0.4) circle(0.4 pt); 
\filldraw (5,0.5) circle(0.4 pt); 
 
\filldraw (7,0.6) circle(0.4 pt); 
\filldraw (7,0.4) circle(0.4 pt); 
\filldraw (7,0.5) circle(0.4 pt); 

\filldraw (8,0.6) circle(0.4 pt); 
\filldraw (8,0.4) circle(0.4 pt); 
\filldraw (8,0.5) circle(0.4 pt); 

\filldraw (6,0.6) circle(0.4 pt); 
\filldraw (6,0.4) circle(0.4 pt); 
\filldraw (6,0.5) circle(0.4 pt); 

\filldraw[black] (1,0) circle (1pt);
\filldraw[black] (3,0) circle (1pt);
\filldraw[black] (5,0) circle (1pt);
\filldraw[black] (6,0) circle (1pt);
\filldraw[black] (8,0) circle (1pt);

\node (a1) at (0.9,0.1){$u_1^{k-1}$}; 
\node (a2) at (2,0.2) {$u_2^{k-1}$};  
\node (a3) at (3,0.2)  {$u_3^{k-1}$};  
\node (a4) at (4,0.2)  {$u_4^{k-1}$};  
\node (a5) at (4.9,0.2)  {$u_5^{k-1}$};  
\node (a6) at (6.3,0.2) {$u_{2s-1}^{k-1}$};  
\node (a7) at (7.2,0.21)  {$u_{2s}^{k-1}$};  
\node (a8) at (8.2,0.18)  {$u_1^{k-1}$};  

\end{scope}

\begin{scope}[shift={(1,-2)}]

\draw [line width=2pt, line cap=rectengle, dash pattern=on 1pt off 2] (1,1) -- (2,1);
\path[edge] (1,1) -- (2,1);

\draw [line width=3pt, line cap=round, dash pattern=on 0pt off 1.3\pgflinewidth] (2,1) -- (3,1);
\path[edge] (2,1) -- (3,1);

\draw [line width=2pt, line cap=rectengle, dash pattern=on 1pt off 2] (3,1) -- (4,1);
\path[edge] (3,1) -- (4,1);

\draw [line width=3pt, line cap=round, dash pattern=on 0pt off 1.3\pgflinewidth] (4,1) -- (5,1);
\path[edge] (4,1) -- (5,1);

\filldraw (5.4,1) circle(0.4 pt); 
\filldraw (5.5,1) circle(0.4 pt); 
\filldraw (5.6,1) circle(0.4 pt); 
\draw [line width=2pt, line cap=rectengle, dash pattern=on 1pt off 2] (6,1) -- (7,1);
\path[edge] (6,1) -- (7,1);
\draw [line width=3pt, line cap=round, dash pattern=on 0pt off 1.3\pgflinewidth] (7,1) -- (8,1);
\path[edge] (7,1) -- (8,1);
\path[edge] (1,1) -- (1,0);
\path[edge] (2,1) -- (2,0);
\path[edge] (3,1) -- (3,0);
\path[edge] (4,1) -- (4,0);
\path[edge] (5,1) -- (5,0);
\path[edge] (7,1) -- (7,0);
\path[edge] (8,1) -- (8,0);
\path[edge] (6,1) -- (6,0);

 \filldraw[black] (1,0) circle (1pt);
\filldraw[black] (3,0) circle (1pt);
\filldraw[black] (5,0) circle (1pt);
\filldraw[black] (6,0) circle (1pt);
\filldraw[black] (8,0) circle (1pt);
\node (a1) at (0.8,0.1){$u_1^k$}; 
\node (a2) at (1.8,0.21) {$u_2^k$};  
\node (a3) at (2.8,0.21)  {$u_3^k$};  
\node (a4) at (3.8,0.21)  {$u_4^k$};  
\node (a5) at (4.8,0.21)  {$u_5^k$};  
\node (a6) at (6.3,0.21) {$u_{2s-1}^k$};  
\node (a7) at (7.2,0.21)  {$u_{2s}^k$};  
\node (a8) at (8.18,0.1)  {$u_1^k$};  

\end{scope}

\begin{scope}[shift={(1,-3)}]

\draw [line width=2pt, line cap=rectengle, dash pattern=on 1pt off 2] (1,1) -- (2,1);
\path[edge] (1,1) -- (2,1);

\draw [line width=3pt, line cap=round, dash pattern=on 0pt off 1.3\pgflinewidth] (2,1) -- (3,1);
\path[edge] (2,1) -- (3,1);

\draw [line width=2pt, line cap=rectengle, dash pattern=on 1pt off 2] (3,1) -- (4,1);
\path[edge] (3,1) -- (4,1);

\draw [line width=3pt, line cap=round, dash pattern=on 0pt off 1.3\pgflinewidth] (4,1) -- (5,1);
\path[edge] (4,1) -- (5,1);

\filldraw (5.4,1) circle(0.4 pt); 
\filldraw (5.5,1) circle(0.4 pt); 
\filldraw (5.6,1) circle(0.4 pt); 
\draw [line width=2pt, line cap=rectengle, dash pattern=on 1pt off 2] (6,1) -- (7,1);
\path[edge] (6,1) -- (7,1);
\draw [line width=3pt, line cap=round, dash pattern=on 0pt off 1.3\pgflinewidth] (7,1) -- (8,1);
\path[edge] (7,1) -- (8,1);
\path[edge,dashed] (1,1) -- (1,0);
\path[edge,dashed] (2,1) -- (2,0);
\path[edge,dashed] (3,1) -- (3,0);
\path[edge,dashed] (4,1) -- (4,0);
\path[edge,dashed] (5,1) -- (5,0);
\path[edge,dashed] (7,1) -- (7,0);
\path[edge,dashed] (8,1) -- (8,0);
\path[edge,dashed] (6,1) -- (6,0);

 \filldraw[black] (1,0) circle (1pt);
\filldraw[black] (3,0) circle (1pt);
\filldraw[black] (5,0) circle (1pt);
\filldraw[black] (6,0) circle (1pt);
\filldraw[black] (8,0) circle (1pt);
\node (a1) at (0.8,-0.2){$u_{1+2r}^1$}; 
\node (a2) at (1.9,-0.21) {$u_{2+2r}^1$};  
\node (a3) at (2.9,-0.21)  {$u_{3+2r}^1$};  
\node (a4) at (3.9,-0.21)  {$u_{4+2r}^1$};  
\node (a5) at (4.9,-0.21)  {$u_{5+2r}^1$};  
\node (a6) at (6.2,-0.21) {$u_{2r-1}^1$};  
\node (a7) at (7.2,-0.21)  {$u_{2r}^1$};  
\node (a8) at (8,-0.18)  {$u_{1+2r}^1$};  

\end{scope}

\begin{scope}[shift={(1,-4)}]

\draw [line width=2pt, line cap=rectengle, dash pattern=on 1pt off 2] (1,1) -- (2,1);
\path[edge] (1,1) -- (2,1);

\draw [line width=3pt, line cap=round, dash pattern=on 0pt off 1.3\pgflinewidth] (2,1) -- (3,1);
\path[edge] (2,1) -- (3,1);

\draw [line width=2pt, line cap=rectengle, dash pattern=on 1pt off 2] (3,1) -- (4,1);
\path[edge] (3,1) -- (4,1);

\draw [line width=3pt, line cap=round, dash pattern=on 0pt off 1.3\pgflinewidth] (4,1) -- (5,1);
\path[edge] (4,1) -- (5,1);

\filldraw (5.4,1) circle(0.4 pt); 
\filldraw (5.5,1) circle(0.4 pt); 
\filldraw (5.6,1) circle(0.4 pt);  
\draw [line width=2pt, line cap=rectengle, dash pattern=on 1pt off 2] (6,1) -- (7,1);
\path[edge] (6,1) -- (7,1);
\draw [line width=3pt, line cap=round, dash pattern=on 0pt off 1.3\pgflinewidth] (7,1) -- (8,1);
\path[edge] (7,1) -- (8,1);
\end{scope}

\begin{scope}[shift={(2,-4)}]
\draw [line width=2pt, line cap=rectengle, dash pattern=on 1pt off 2] (0,0) -- (1,0);
\path[edge] (0,0) -- (1,0);
\draw [line width=3pt, line cap=round, dash pattern=on 0pt off 1.3\pgflinewidth] (2,0) -- (3,0);
\path[edge] (2,0) -- (3,0);
\path[edge] (4,0) -- (5,0);
\path[edge, dashed] (6,0) -- (7,0);
\node () at (0.5,0.2){$0$}; 
\node () at (2.5,0.2){$2$}; 
\node () at (4.5,0.2){$1$}; 
\node () at (6.5,0.2){$3$}; 
\end{scope}

\end{tikzpicture}
\caption{A semi-equivelar gem of $(4^4)$-type  embedded regularly on $\mathbb{S}^1 \times \mathbb{S}^1$, where the addition in the subscript of $u^1_{i}$ is of modulo $2s$, $1 \leq i \leq 2s$ and $0\leq r \leq s-1$.}\label{fig:L(p,q)}
\end{figure}

\begin{corollary}\label{lemma:4^4}
There does not exist a $(4^4)$-type semi-equivelar gem representing $\mathbb{S}^2 \times \mathbb{S}^1$ embedded regularly on $\mathbb{S}^1 \times \mathbb{S}^1$.
\end{corollary}

\begin{corollary} \label{cor:}
There does not exist any $(4^4)$-type semi-equivelar gem  embedded regularly on $\mathbb{S}^1 \tilde{\times} \mathbb{S}^1$. 
\end{corollary}

\begin{proof}
To get $(4^4)$-type semi-equivelar non-bipartite gems $
\Gamma$ (up to isomorphism), the proof of Lemma \ref{newlemma:4^4} is modified as: $u_1^k$ is to be joined with $u_t^1$ by $3$-colored edge for some even $t$, $1 \leq t \leq 2s$. Then $u_2^k$ is joined to $u^1_{t-1}$ by $3$-colored edge, $u_3^k$ is joined to $u^1_{t-2}$ by $3$-colored edge, and so on. Thus, we get a $4$-regular colored graph $\Gamma$ embedded regularly on $\mathbb{S}^1 \tilde{\times} \mathbb{S}^1$. If $\frac{t}{2}$ is odd, then $\Gamma_{\{0,1,3\}}$ has a non-bipartite component generated by the vertices $u^l_{\frac{t}{2}}, u^l_{\frac{t}{2}+1}$, $1\leq l \leq k$. If $\frac{t}{2}$ is even, then $\Gamma_{\{1,2,3}\}$ has a non-bipartite component generated by the vertices $u^l_{\frac{t}{2}}, u^l_{\frac{t}{2}+1}$, $1\leq l \leq k$. In either case, we have a contradiction, as each component of $\Gamma_{\{0,1,3\}}$ and $\Gamma_{\{1,2,3\}}$ is bipartite since it represents the $2$-sphere. Therefore, the result follows.
\end{proof}
	
\begin{lemma}\label{lemma:L(p,q)}
For every pair of co-prime integers $(p,q)$, $p>q\geq 1$, there is a $(4^4)$-type semi-equivelar gem that represents the lens space $L(p,q)$ and is regularly embedded on $\mathbb{S}^1 \times \mathbb{S}^1$.
\end{lemma}

\begin{proof}
Consider a pair of co-prime integers $(p,q)$ with $p>q\geq 1$. By choosing $s=p$ and $r=q$ in the proof of Lemma \ref{newlemma:4^4}, we obtain a $(4^4)$-type semi-equivelar gem representing the lens space $L(p,q)$, regularly embedded on $\mathbb{S}^1 \times \mathbb{S}^1$ (refer to Figure \ref{fig:L(p,q)} for the case $(s,r)=(p, q)$). Additionally, for $k=2$, we obtain the standard $2p$-vertex gem that represents the lens space $L(p,q)$. This gem, well-documented in the literature, is commonly referred to as the standard crystallization of the lens space $L(p,q)$.
\end{proof}
	
\begin{lemma}\label{lemma:(6^2,4)}
There does not exist a $(6^2,4^1)$-type semi-equivelar gem embedded regularly on $\mathbb{RP}^2$.
\end{lemma}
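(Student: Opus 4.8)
The plan is to first extract the rigid combinatorial skeleton forced by the hypotheses. By Lemma~\ref{lemma:possibletypes}, an embedding of type $(6^2,4^1)$ on a surface with $\chi(S)=1$ (so $S=\mathbb{RP}^2$) has $p=12$ and $d'=3$, hence $\Gamma$ is a connected $3$-regular, properly $3$-edge-coloured graph on $12$ vertices. I would begin by showing that the ``square bicolour'' is globally constant: if at some vertex the $4$-gon is the $\{i,j\}$-cycle, then pushing this across any incident edge (two of the three bicoloured cycles through an edge are shared by its endpoints) forces the same bicolour to carry the $4$-gon at the adjacent vertex, and connectedness does the rest. So, relabelling colours, every $\{0,2\}$-cycle is a $4$-gon and every $\{0,1\}$- and $\{1,2\}$-cycle is a hexagon, giving $g_{02}=3$, $g_{01}=g_{12}=2$ (consistent with $g_{01}+g_{02}+g_{12}-p/2=1$). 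Viewing the three colour classes as perfect matchings $M_0,M_1,M_2$, the $\{0,2\}$-structure makes $M_0\cup M_2$ three $4$-cycles (the \emph{squares}); each square is a $2\times2$ array of vertices indexed by its two $M_0$-edges (\emph{rows}) and two $M_2$-edges (\emph{columns}).

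Next I would reformulate the two hexagon conditions. Contracting each $M_0$-pair gives $6$ nodes, and the requirement $g_{01}=2$ says exactly that $M_1$, projected onto these nodes, is a disjoint union of two triangles; likewise $g_{12}=2$ says $M_1$ projected onto the $6$ contracted $M_2$-pairs is two triangles. The key structural observation is that a square contains an internal $M_1$-edge if and only if its two rows lie in a common $M_0$-triangle, if and only if its two columns lie in a common $M_2$-triangle (an internal $M_1$-edge cannot lie in a single row or column, since those are $M_2$- or $M_0$-edges, so it must be a diagonal, contributing one edge to each projected graph). Hence the set of ``collapsed'' squares is the \emph{same} for both partitions, and a short count on triangle sizes (splitting the six row-nodes into two triples) forces the number $u$ of collapsed squares to be $0$ or $2$.

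Finally I would treat the two cases by a parity/odd-cycle obstruction rather than brute enumeration, which is where the real work lies. For $u=0$, relabel rows so that one $M_0$-triangle consists of one row from each square; then every $M_1$-edge joins two rows of the same $M_0$-triangle, so it is either ``top'' or ``bottom''. Since each column carries exactly one top and one bottom cell, each $M_2$-node meets one top and one bottom $M_1$-edge, so a triangle of the $M_2$-partition would require its three edges to be $2$-coloured with both colours at every vertex---impossible on an odd cycle. For $u=2$, the unique non-collapsed square has its two columns each sending one $M_1$-edge into each collapsed square, so no $M_2$-triangle can contain a collapsed square's column-pair, contradicting the forced triangle structure. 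In either case the would-be second hexagon fuses into a single $12$-cycle ($g_{12}=1$), so no $(6^2,4^1)$-gem exists on $\mathbb{RP}^2$. The main obstacle is precisely isolating this clean odd-cycle argument: without it one is pushed into a case-by-case search over all matchings $M_1$, and the payoff of the grid description is that it makes the parity contradiction transparent while everything else reduces to bookkeeping.
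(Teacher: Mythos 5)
Your proof is correct, but it takes a genuinely different route from the paper's. The paper argues geometrically inside the embedding itself: it models $\mathbb{RP}^2$ as a disk with antipodal boundary identification, fixes a square with its forced edge-colouring, and runs a three-case analysis on how the neighbouring hexagons can sit relative to the square and the identified boundary, deriving in each case a face of the wrong size or an impossible adjacency. You instead work entirely with the abstract $3$-edge-coloured graph: after the (correct) propagation argument showing the square bicolour is globally constant, the data $p=12$, $g_{02}=3$, $g_{01}=g_{12}=2$ with \emph{all} $\{0,1\}$- and $\{1,2\}$-cycles hexagons forces the two contracted projections of $M_1$ to be disjoint unions of two triangles, and your internal-diagonal equivalence (which does use that in a triangle every pair of nodes is adjacent, and that multi-edges and loops are excluded by the absence of $2$- and $4$-gons in those bicolour pairs --- both available to you) pins the number of collapsed squares to $0$ or $2$; the odd-cycle parity obstruction kills $u=0$ and the degree count on column-nodes kills $u=2$. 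What each approach buys: the paper's argument is visual and short once the disk model is drawn, but it is tied to a particular picture of the embedding and its case analysis is somewhat informal (e.g.\ the choices of which hexagon edges ``split'' along the boundary). Your argument never touches the surface beyond extracting $\chi=1\Rightarrow p=12$ from Lemma \ref{lemma:possibletypes}; it proves the stronger statement that \emph{no} $3$-coloured graph on $12$ vertices has all $\{0,2\}$-cycles of length $4$ and all other bicoloured cycles of length $6$, hence in particular none embeds regularly on $\mathbb{RP}^2$, and it does so by a clean parity obstruction rather than exhaustion --- a template that would adapt to ruling out other types or orders. The one place to be slightly more careful in a written version is the propagation step (spell out that across an edge of the third colour both shared bicoloured cycles are hexagons at the first endpoint, so the neighbour's unique square must lie on the remaining pair), but as sketched the argument is sound.
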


\begin{proof}
If there exists a $(6^2,4^1)$-type semi-equivelar gem which embeds regularly on $\mathbb{RP}^2$ then its double cover must be a $(6^2,4^1)$-type semi-equivelar gem embedded regularly on $\mathbb{S}^2$. Now, Figure \ref{fig:no RP2} gives a $(6^2,4^1)$-type semi-equivelar gem $\Gamma$ embedded on $\mathbb{S}^2$. Moreover, $\Gamma$ is a unique $(6^2,4^1)$-type semi-equivelar gem, up to isomorphism, due to regular embedding of a $3$-colored gem. Now, antipodal identification of $\Gamma$, which is a truncated octahedron, does not preserve coloring adjacency. Thus, there does not exist any antipodal identification in $\Gamma$ to get a semi-equivelar embedding on $\mathbb{RP}^2$. Hence, the result follows.
\end{proof}

The Figures \ref{fig:1)}$(a)$, \ref{fig:1)}$(c)$, \ref{fig:1)}$(e)$ and \ref{fig:2}$(a)$ show the regular embeddings of semi-equivelar gems on the projective plane. Further, Figures \ref{fig:1)}$(b)$, \ref{fig:1)}$(d)$,  \ref{fig:1)}$(f)$, \ref{fig:2}$(b)$ and \ref{fig:no RP2} give the regular embeddings of semi-equivelar gems on $2$-sphere. Furthermore, Figures  \ref{fig:3)}$(a)$, \ref{fig:3)}$(b)$ and \ref{fig:3)}$(c)$  show the regular embeddings of semi-equivelar gems on the torus. It can be easily observed that changing the identifications on the boundary of $\mathbb{D}^2$ (rectangle) in Figures  \ref{fig:3)}$(a)$, \ref{fig:3)}$(b)$ and \ref{fig:3)}$(c)$  gives the semi-equivelar gems on Klein Bottle. Now, using Lemma \ref{lemma:(6^2,4)} and Corollary \ref{cor:}, we have the following theorem and thus answer the first problem.

\begin{theorem} 
Let $S$ be a surface with $\chi(S)\geq 0$. Then semi-equivelar gems embedded regularly on $S$ exist for every embedding type from Proposition $\ref{prop:dattasir}$ and Lemma $\ref{lemma:possibletypes}$, except for the $(6^2,4^1)$-type on real projective plane and the $(4^4)$-type on the Klein bottle.
\end{theorem}
	
\begin{remark}
{\rm
Let $M_1$ and $M_2$ be two PL-manifolds such that $\mathcal{G}(M_1) \neq \mathcal{G}(M_2)$. There may exist semi-equivelar gems representing $M_1$ and $M_2$ of the same type and same order embedded on the same surface. For example, $\mathbb{S}^3$ and $\mathbb{RP}^3$ have semi-equivelar gems of $(4^4)$-type and order $8$ embedded on the torus.}
\end{remark}
	
\begin{theorem}\label{seconda}
Let $S$ be a closed connected surface. Then $S$ admits a genus-minimal semi-equivelar gem. In particular, for $S=\#_n (\mathbb{S}^1 \times \mathbb{S}^1)$ (resp., $S=\#_n(\mathbb{RP}^2)$), the semi-equivelar gem of type $((4n+2)^3)$ (resp., $((2n+2)^3)$) exists.
\end{theorem}

\begin{proof}
In Figure \ref{fig:4}$(a)$, the semi-equivelar gem of type $((4n+2)^3)$ for the surface $\#_n (\mathbb{S}^1 \times \mathbb{S}^1)$ embedded on the surface $\#_n (\mathbb{S}^1 \times \mathbb{S}^1)$, has been constructed. The gem has $4n+2$ vertices (the vertices on the inside circle of the figure). Each pair of (dotted) $xy$ edges with a common mid-point is identified to get the surface.
		 
In Figure \ref{fig:4}$(b)$, the semi-equivelar gem of type $((2n+2)^3)$ for the surface $\#_n(\mathbb{RP}^2)$ embedded on the surface $\#_n(\mathbb{RP}^2)$,  has been constructed separately for even and odd $n$. The gem has $2n+2$ vertices (the vertices on the inside circles of the figure). Each pair of (dotted) $xy$ edges with a common mid-point is identified to get the surface.
		
The result follows with any of the semi-equivelar gems with zero genera representing $\mathbb{S}^2$ in Figures  \ref{fig:1)}$(b)$, \ref{fig:1)}$(d)$, \ref{fig:1)}$(f)$, \ref{fig:2}$(b)$ and \ref{fig:no RP2}. 
\end{proof}

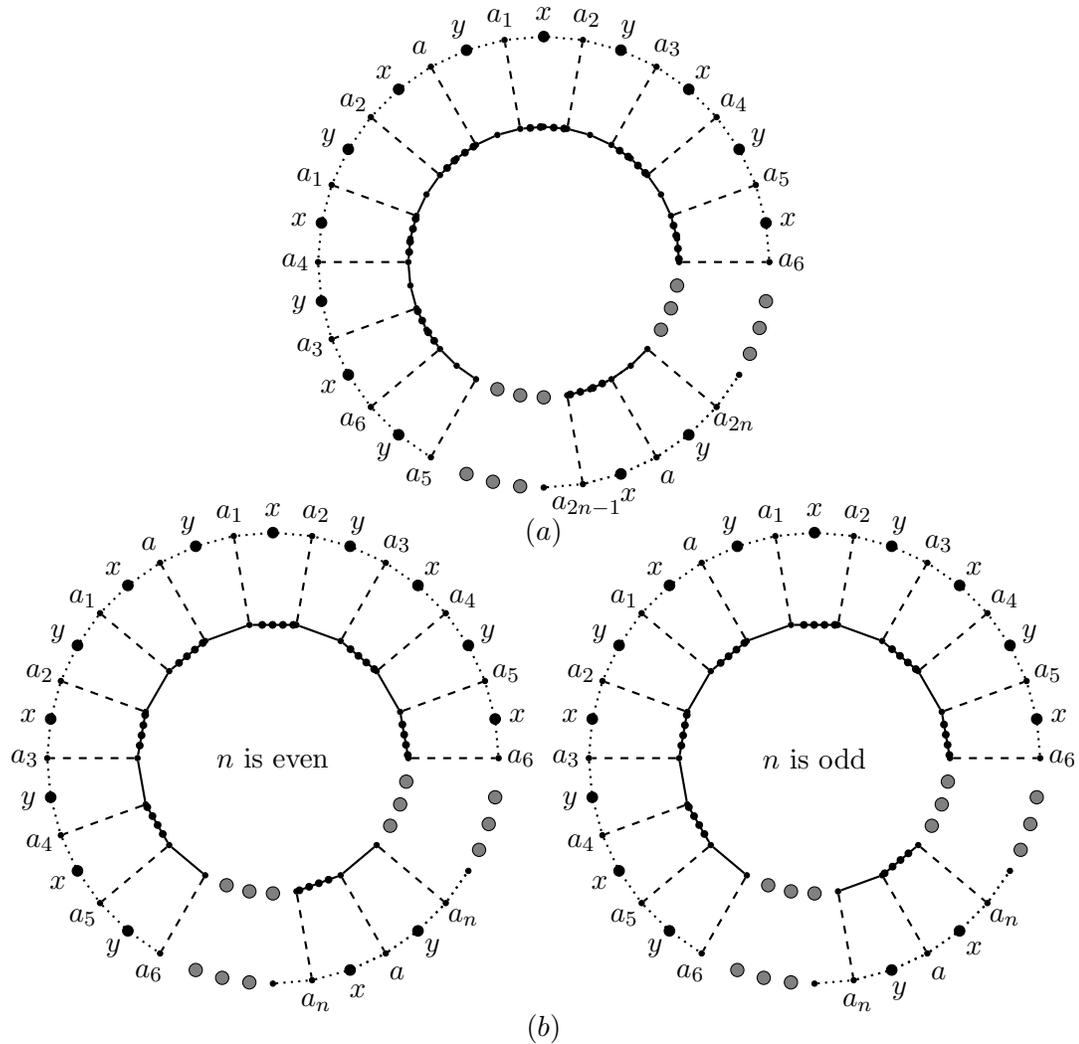
\begin{figure}[!h]
\tikzstyle{ver}=[]
\tikzstyle{vert}=[circle, draw, fill=black!100, inner sep=0pt, minimum width=2pt]
\tikzstyle{verti}=[circle, draw, fill=black!100, inner sep=0pt, minimum width=4pt]
\tikzstyle{vertii}=[circle, draw, fill=black!50, inner sep=0pt, minimum width=5pt]
\tikzstyle{edge} = [draw,thick,-]
\centering
					
\begin{tikzpicture}[scale=0.6]

\begin{scope}[shift={(8,0)}]
\foreach \x/\y in {270/x_7,250/y_7,260/13,350/x_9,340/17,330/y_9}{
\node[vertii] (\y) at (\x:3){};}
							
\foreach \x in {250,257,264,336,343,350}{
\node[vertii] (\x) at (\x:5){};}
							
\foreach \x/\y in {0/0,20/1,40/2,60/3,80/4,100/5,120/6,140/7,160/8,180/9,200/10,220/11,240/12,
280/14,300/15,320/16}{\node[vert] (\y) at (\x:3){};}
							
\foreach \x/\y in {0/0',20/1',40/2',60/3',80/4',100/5',120/6',140/7',160/8',180/9',200/10',220/11',240/12', 280/14',300/15',320/16',10/y_1',50/y_2',90/y_3',130/y_4',170/y_5',210/y_6',290/y_8', 30/x_1',70/x_2',110/x_3',150/x_4',190/x_5',230/x_6',310/x_8',270/x_7',330/y_9'}{\node[vert] (\y) at (\x:5){};}
							
\foreach \x/\y in {10/x,50/x,90/x,130/x,170/x,210/x}{
\node[ver] (\y) at (\x:5.5){$\y$};
\node[verti] (\y) at (\x:5){};
}
							
\foreach \x/\y in {30/y,70/y,110/y,150/y,190/y,230/y}{
\node[ver] (\y) at (\x:5.5){$\y$};
\node[verti] (\y) at (\x:5){};
}
							
\foreach \x/\y in {290/x,310/y}{\node[verti] (\y) at (\x:5){};}
							
\node[ver]  at (290:5.5){$y$};
\node[ver]  at (310:5.5){$x$};
							
							\foreach \x/\y in {0/a_{6},20/a_{5},40/a_{4},60/a_3,80/a_2,100/a_1,120/a,140/a_1,160/a_2,180/a_3,200/a_4,220/a_5, 240/a_6,
								280/a_{n},300/a,320/a_{n}}{
								\node[ver] (\y) at (\x:5.5){$\y$};
							
							}
							
					\foreach \x/\y in {0/1,1/2,2/3,3/4,4/5,5/6,6/7,7/8,8/9,9/10,10/11,11/12,14/15,15/16}{
								\path[edge] (\x) -- (\y);}
							\foreach \x/\y in {0/1,2/3,4/5,6/7,8/9,10/11,15/16}{
								\draw [line width=3pt, line cap=round, dash pattern=on 0pt off 1.3\pgflinewidth]  (\x) -- (\y);}
							
							\foreach \x/\y in {14'/y_8'}{
								\path[edge,dotted] (\x) -- (\y);}
							\foreach \x/\y in {0/0',1/1',2/2',3/3',4/4',5/5',6/6',7/7',8/8',9/9',10/10',11/11',12/12',14/14',15/15',16/16'}{
								\path[edge, dashed] (\x) -- (\y);}
							\foreach \x/\y in {0'/y_1',y_1'/1',1'/x_1',x_1'/2',2'/y_2',y_2'/3',3'/x_2',x_2'/4',4'/y_3',y_3'/5',5'/x_3',
								x_3'/6',6'/y_4',y_4'/7',7'/x_4',x_4'/8',8'/y_5',y_5'/9',9'/x_5',x_5'/10',10'/y_6',y_6'/11',11'/x_6',x_6'/12',14'/y_8',y_8'/15',15'/x_8',x_8'/16',16'/y_9',x_7'/14'}{
								\draw [edge,dotted] (\x) -- (\y);}

							\foreach \x/\y in {}{
								\path[edge] (\x) -- (\y);}
							
							\foreach \x/\y in {y_8'/15',15'/x_8',x_8'/16',16'/y_9'}{
								\path[thick,edge,dotted] (\x) -- (\y);}
							\node[ver] at (0,0){$n$ is odd};
							
						\end{scope}
						
						\begin{scope}[shift={(-4,0)}]
							\foreach \x/\y in {270/x_7,250/y_7,260/13,350/x_9,340/17,330/y_9}{
							
								\node[vertii] (\y) at (\x:3){};}
							
							\foreach \x in {250,257,264,336,343,350}{
							
								\node[vertii] (\x) at (\x:5){};}
							
							\foreach \x/\y in {0/0,20/1,40/2,60/3,80/4,100/5,120/6,140/7,160/8,180/9,200/10,220/11,240/12,
								280/14,300/15,320/16}{
							
								\node[vert] (\y) at (\x:3){};}
							
							\foreach \x/\y in {0/0',20/1',40/2',60/3',80/4',100/5',120/6',140/7',160/8',180/9',200/10',220/11',240/12',
								280/14',300/15',320/16',10/y_1',50/y_2',90/y_3',130/y_4',170/y_5',210/y_6',290/y_8', 30/x_1',70/x_2',110/x_3',150/x_4',190/x_5',230/x_6',310/x_8',270/x_7',330/y_9'}{
							
								\node[vert] (\y) at (\x:5){};}
							
							\foreach \x/\y in {10/x,50/x,90/x,130/x,170/x,210/x
							}{
								\node[ver] (\y) at (\x:5.5){$\y$};
								\node[verti] (\y) at (\x:5){};
							}
							
							\foreach \x/\y in {30/y,70/y,110/y,150/y,190/y,230/y}{
								\node[ver] (\y) at (\x:5.5){$\y$};
								\node[verti] (\y) at (\x:5){};
							}
							
							\foreach \x/\y in {290/x,310/y}{
								\node[verti] (\y) at (\x:5){};
							}
							
\node[ver]  at (290:5.5){$x$};
\node[ver]  at (310:5.5){$y$};
							
							\foreach \x/\y in {0/a_{6},20/a_{5},40/a_{4},60/a_3,80/a_2,100/a_1,120/a,140/a_1,160/a_2,180/a_3,200/a_4,220/a_5, 240/a_6,
								280/a_{n},300/a,320/a_{n}}{
								\node[ver] (\y) at (\x:5.5){$\y$};
								
							}

							\foreach \x/\y in {0/1,1/2,2/3,3/4,4/5,5/6,6/7,7/8,8/9,9/10,10/11,11/12,14/15,15/16}{
								\path[edge] (\x) -- (\y);}
							\foreach \x/\y in {0/1,2/3,4/5,6/7,8/9,10/11,14/15}{
								\draw [line width=3pt, line cap=round, dash pattern=on 0pt off 1.3\pgflinewidth]  (\x) -- (\y);}

							\foreach \x/\y in {0/0',1/1',2/2',3/3',4/4',5/5',6/6',7/7',8/8',9/9',10/10',11/11',12/12',14/14',15/15',16/16'}{
								\path[edge, dashed] (\x) -- (\y);}
							\foreach \x/\y in {0'/y_1',y_1'/1',1'/x_1',x_1'/2',2'/y_2',y_2'/3',3'/x_2',x_2'/4',4'/y_3',y_3'/5',5'/x_3',
								x_3'/6',6'/y_4',y_4'/7',7'/x_4',x_4'/8',8'/y_5',y_5'/9',9'/x_5',x_5'/10',10'/y_6',y_6'/11',11'/x_6',x_6'/12',14'/y_8',y_8'/15',15'/x_8',x_8'/16',16'/y_9',x_7'/14'}{
								\draw [edge,dotted] (\x) -- (\y);}

							\foreach \x/\y in {y_8'/15',15'/x_8',x_8'/16',16'/y_9'}{
								\path[thick,edge,dotted] (\x) -- (\y);}
							\node[ver] at (0,0){$n$ is even};
		\node[ver] at (6,-6){$(b)$};					
						\end{scope}
						
						\begin{scope}[shift={(2,11)}]
							\foreach \x/\y in {270/x_7,250/y_7,260/13,350/x_9,340/17,330/y_9}{
								
								\node[vertii] (\y) at (\x:3){};}
							
							\foreach \x in {250,257,264,336,343,350}{
								
								\node[vertii] (\x) at (\x:5){};}
							
							\foreach \x/\y in {0/0,20/1,40/2,60/3,80/4,100/5,120/6,140/7,160/8,180/9,200/10,220/11,240/12,
								280/14,300/15,320/16,10/y_1,50/y_2,90/y_3,130/y_4,170/y_5,210/y_6,290/y_8, 30/x_1,70/x_2,110/x_3,150/x_4,190/x_5,230/x_6,310/x_8}{
							
								\node[vert] (\y) at (\x:3){};}
							
							\foreach \x/\y in {0/0',20/1',40/2',60/3',80/4',100/5',120/6',140/7',160/8',180/9',200/10',220/11',240/12',
								280/14',300/15',320/16',10/y_1',50/y_2',90/y_3',130/y_4',170/y_5',210/y_6',290/y_8', 30/x_1',70/x_2',110/x_3',150/x_4',190/x_5',230/x_6',310/x_8',270/x_7',330/y_9'}{
								
								\node[vert] (\y) at (\x:5){};}
							
							\foreach \x/\y in {10/x,50/x,90/x,130/x,170/x,210/x
							}{
								\node[ver] (\y) at (\x:5.5){$\y$};
								\node[verti] (\y) at (\x:5){};
							}
							
							\foreach \x/\y in {30/y,70/y,110/y,150/y,190/y,230/y}{
								\node[ver] (\y) at (\x:5.5){$\y$};
								\node[verti] (\y) at (\x:5){};
							}
							
							\foreach \x/\y in {290/x,310/y}{
								\node[verti] (\y) at (\x:5){};
							}
							
\node[ver]  at (290:5.5){$x$};
\node[ver]  at (310:5.5){$y$};
							
							\foreach \x/\y in {0/a_{6},20/a_{5},40/a_{4},60/a_3,80/a_2,100/a_1,120/a,140/a_2,160/a_1,180/a_4,200/a_3,220/a_6, 240/a_5,
								320/a_{2n},300/a}{
								\node[ver] (\y) at (\x:5.5){$\y$};
								
							}
							\node[ver] () at (280:5.5){$a_{2n-1}$};

							\foreach \x/\y in {0/y_1,y_1/1,1/x_1,x_1/2,2/y_2,y_2/3,3/x_2,x_2/4,4/y_3,y_3/5,5/x_3,
								x_3/6,6/y_4,y_4/7,7/x_4,x_4/8,8/y_5,y_5/9,9/x_5,x_5/10,10/y_6,y_6/11,11/x_6,x_6/12,14/y_8,y_8/15,15/x_8,x_8/16}{
								\path[edge] (\x) -- (\y);}
							\foreach \x/\y in {0/y_1,y_1/1,2/y_2,y_2/3,4/y_3,y_3/5,6/y_4,y_4/7,8/y_5,y_5/9,
								10/y_6,y_6/11,14/y_8,y_8/15}{
								\draw [line width=3pt, line cap=round, dash pattern=on 0pt off 1.3\pgflinewidth]  (\x) -- (\y);}
							
							\foreach \x/\y in {14/y_8,y_8/15,14'/y_8'}{
								\path[edge,dotted] (\x) -- (\y);}
							\foreach \x/\y in {0/0',1/1',2/2',3/3',4/4',5/5',6/6',7/7',8/8',9/9',10/10',11/11',12/12',14/14',15/15',16/16'}{
								\path[edge, dashed] (\x) -- (\y);}
							\foreach \x/\y in {0'/y_1',y_1'/1',1'/x_1',x_1'/2',2'/y_2',y_2'/3',3'/x_2',x_2'/4',4'/y_3',y_3'/5',5'/x_3',
								x_3'/6',6'/y_4',y_4'/7',7'/x_4',x_4'/8',8'/y_5',y_5'/9',9'/x_5',x_5'/10',10'/y_6',y_6'/11',11'/x_6',x_6'/12',14'/y_8',y_8'/15',15'/x_8',x_8'/16',16'/y_9',x_7'/14'}{
								\draw [edge,dotted] (\x) -- (\y);}

							\foreach \x/\y in {}{
								\path[edge] (\x) -- (\y);}
							
							\foreach \x/\y in {y_8'/15',15'/x_8',x_8'/16',16'/y_9',15/x_8,x_8/16,15/x_8}{
								\path[thick,edge,dotted] (\x) -- (\y);}
							
		\node[ver] at (0,-6){$(a)$};					
						\end{scope}

					\end{tikzpicture}
\caption{$(a)$ Embedding on $\#_n (\mathbb{S}^1\times \mathbb{S}^1)$ of gem representing $\#_n (\mathbb{S}^1\times \mathbb{S}^1)$ of type $((4n+2)^3)$,
$(b)$ Embedding on $\#_n \mathbb{RP}^2$ of gem representing $\#_{n} \mathbb{RP}^2$ of type $((2n+2)^3)$.}\label{fig:4}
\end{figure}

\begin{theorem}\label{theorem:secondbiff}
Let $M$ be a closed connected orientable $d$-manifold (where $d \geq 3$) such that $\mathcal{G}(M) \leq 1$. Then, $M$ admits a genus-minimal semi-equivelar gem if and only if $M$ is a lens space.
\end{theorem} 

\begin{proof}
It has been established that the only PL $d$–manifold (for $d \geq 2$) with regular genus zero is $\mathbb{S}^d$, as documented in \cite{fg82}. By examining the possible embedding types on the $2$-sphere, as presented in Lemma \ref{lemma:possibletypes}, it is evident that a semi-equivelar gem representing $\mathbb{S}^d$ cannot be genus-minimal for $d\geq 3$. Likewise, an inspection of the possible embedding types on the torus, as outlined in Proposition \ref{prop:dattasir}, reveals that a semi-equivelar gem representing a closed orientable PL $d$-manifold $M$ with $\mathcal{G}(M) = 1$ cannot be genus-minimal for $d\geq 4$. Therefore, if a closed orientable PL $d$–manifold $M$ (for $d \geq 3$) with $\mathcal{G}(M) \leq 1$ possesses a genus-minimal semi-equivelar gem, then $\mathcal{G}(M) = 1$ and $d=3$. In \cite{h76}, it is demonstrated that every closed orientable $3$-manifold with $\mathcal{G}(M)=1$ is homeomorphic to either $\mathbb{S}^2 \times \mathbb{S}^1$ or a lens space $L(p,q)$. Further, Corollary \ref{lemma:4^4} asserts that $\mathbb{S}^2 \times \mathbb{S}^1$ does not admit a genus-minimal semi-equivelar gem. Since Lemma \ref{lemma:L(p,q)} confirms that every lens space $L(p,q)$ (for co-prime integers $p>q\geq 1$) admits a genus-minimal semi-equivelar gem, this concludes the proof.
\end{proof}

The term `semi-equivelar gem' is inspired by semi-equivelar maps, which do not have concept of bigons. Getting motivated by the previous work on maps, we have chosen to exclude bigons in our approach. However, it is worth noting that 2-length cycles in a graph are of great significance in crystallization theory. This consideration has led us to introduce the term `semi-equivelar gems (with bigons)'. Furthermore, the absence of genus-minimal semi-equivelar gems for certain spaces has motivated our focus on semi-equivelar gems (with bigons). We are considering future research related to semi-equivelar gems (with bigons).

\begin{remark}
{\rm
If we extend our consideration to include $2$-gons, which are polygons of length $2$, during the embedding of a gem on a surface, then a gem meeting the conditions specified in Definition \ref{def:semiequivelar} is denoted as a semi-equivelar gem (with bigons). Additionally, a PL $d$-manifold $M$ represented by a semi-equivelar gem (with bigons) is said to admit a semi-equivelar gem (with bigons). It is important to note that a semi-equivelar gem is inherently a semi-equivelar gem (with bigons).}
\end{remark}
  		
\begin{remark}\label{remark:secondb}
{\rm It is known that the only PL $d$–manifold (for $d \geq 2$) with regular genus zero is $\mathbb{S}^d$, as established in \cite{fg82}. The standard crystallization of $\mathbb{S}^d$ has exactly two vertices, where each $ij$-colored cycle is a $2$-gon for $i, j \in \Delta_d$. This configuration serves as a genus-minimal semi-equivelar gem (with bigons) representing $\mathbb{S}^d$. In \cite{h76}, it has been established that every closed orientable $3$-manifold with $\mathcal{G}(M)=1$ is homeomorphic to either $\mathbb{S}^2 \times \mathbb{S}^1$ or a lens space $L(p,q)$. In \cite{c89, ch93, cs93}, it is established that for every closed connected orientable PL $d$-manifold $M$ with $d \geq 4$ and $\mathcal{G}(M)=1$, it holds that $M$ is PL-homeomorphic to $\mathbb{S}^{d-1} \times \mathbb{S}^1$. Figure \ref{fig:3)}$(d)$ gives a genus-minimal semi-equivelar gem (with bigons) representing $\mathbb{S}^{d-1} \times \mathbb{S}^1$, $d\geq 3$. It follows from Lemma \ref{lemma:L(p,q)}, every lens space admits a genus-minimal semi-equivelar gem.  Therefore, for a closed connected orientable $d$-manifold $M$  (where $d \geq 3$) with $\mathcal{G}(M)\leq 1$, $M$ admits a genus-minimal semi-equivelar gem (with bigons). Furthermore, each surface $S$ admits a genus-minimal semi-equivelar gem (with bigons) because Theorem \ref{seconda} implies that $S$ admits a genus-minimal semi-equivelar gem. In Figure \ref{fig:3)}$(d)$, if we reverse the orientation of any two opposite edges of the rectangle, then we obtain a genus-minimal semi-equivelar gem (with bigons) representing $\mathbb{S}^{d-1} \tilde{\times} \mathbb{S}^1$.}
\end{remark}

Theorems \ref{seconda}, \ref{theorem:secondbiff} and Remark \ref{remark:secondb} provide answers to the second problem for a subcollection of closed PL $d$-manifolds in terms of the existence of semi-equivelar gems without and with bigons respectively. The next natural question to ask is:

\smallskip

\noindent \textbf{Question 1.} Does every closed PL $d$-manifold admit a semi-equivelar gem (with bigons)? 

\smallskip
		
\noindent If the answer is affirmative then we can define a new term {\it Semi-equivelar} genus as follows.

\begin{definition}
Let $M$ be a closed PL $d$-manifold. The {\it Semi-equivelar genus} (with bigons) $\mathcal G_{seq}(M)$ of $M$ is defined as
$$\mathcal G_{seq}(M) = \min \{\rho(\Gamma) \ | \  (\Gamma,\gamma)\in \mathbb{G}_d \mbox{ is a semi-equivelar gem (with bigons) representing } M\}.$$
\end{definition}
It is clear from the definition of the regular genus $\mathcal G(M)$ and the semi-equivelar genus (with bigons) $\mathcal G_{seq}(M)$ of a closed PL $d$-manifold $M$ that $\mathcal G_{seq}(M) \geq \mathcal G(M)$. This article demonstrates that given a surface or a closed orientable $d$-manifold (where $d \geq 3$) $M$ with $\mathcal G(M) \leq 1$, it follows that $\mathcal G_{seq}(M) = \mathcal G(M)$.
Furthermore, we pose the following question:

\smallskip
		
\noindent \textbf{Question 2.} Let $M$ be a closed PL $d$-manifold. Does $\mathcal G_{seq}(M) =\mathcal G(M)$ hold?

			\begin{figure}[!h]
				\tikzstyle{ver}=[]
				\tikzstyle{vert}=[circle, draw, fill=black!100, inner sep=0pt, minimum width=2pt]
				\tikzstyle{vertii}=[circle, draw, fill=black!50, inner sep=0pt, minimum width=4pt]
				\tikzstyle{edge} = [draw,thick,-]
				\centering
				
				\begin{tikzpicture}[scale=0.35]
					
					\begin{scope}[shift={(12,0)}, scale=0.6]
						\foreach \x/\y/\z in {-5/0/0,-3/2/1,-1/4/2,1/4/3,3/2/4,5/0/5,3/-2/6,1/-4/7,-1/-4/8,-3/-2/9}
						{
							\node[vert] (\z) at (\x,\y){};
						} 
						\foreach \x/\y in {0/1,1/2,2/3,3/4,4/5,5/6,6/7,7/8,8/9,9/0}{\path[edge] (\x) -- (\y);}
						\foreach \x/\y in {0/1,2/3,4/5,6/7,8/9}{\draw [line width=2.5pt, line cap=round, dash pattern=on 0pt off 2\pgflinewidth]  (\x) -- (\y);}
						
						\foreach \x/\y/\z in {-5/-4/10,-3/-6/11,-1/-8/12,1/-8/13,3/-6/14,5/-4/15,7/-4/16,9/-2/17,7/0/18,5/2/19,
							5/4/20,3/6/21,1/6/22,-1/6/23,-3/6/24,-5/4/25,-5/2/26,-7/0/27,-9/-2/28,-7/-4/29,
							-9/-6/30,-11/-8/31,-9/-10/32,-7/-12/33,-5/-12/34,-3/-10/35,3/-10/36,5/-12/37,7/-12/38,
							9/-10/39,11/-8/40,9/-6/41,11/0/42,13/2/43,13/4/44,11/6/45,9/8/46,7/6/47,5/8/48,
							3/10/49,1/10/50,-1/10/51,-3/10/52,-5/8/53,-7/6/54,-9/8/55,-11/6/56,-13/4/57,-13/2/58,
							-11/0/59}
						{
							\node[vert] (\z) at (\x,\y){};
						} 
						
						\foreach \x/\y in {10/29,29/30,30/31,31/32,32/33,33/34,34/35,35/12,12/11,11/10,
							13/14,14/15,15/16,16/41,41/40,40/39,39/38,38/37,37/36,36/13,
							17/18,18/19,19/20,20/47,47/46,46/45,45/44,44/43,43/42,42/17,
							21/22,22/23,23/24,24/53,53/52,52/51,51/50,50/49,49/48,48/21,
							25/26,26/27,27/28,28/59,59/58,58/57,57/56,56/55,55/54,54/25}{\path[edge] (\x) -- (\y);}
						
						\foreach \x/\y in {11/10,35/12,33/34,31/32,29/30,
							14/15,16/41,40/39,38/37,36/13, 18/19,20/47,46/45,44/43,42/17,
							22/23,24/53,52/51,50/49,48/21, 26/27,28/59,58/57,56/55,54/25}{\draw [line width=2.5pt, line cap=round, dash pattern=on 0pt off 2\pgflinewidth]  (\x) -- (\y);}
						
						\foreach \x/\y in {0/27,1/26,2/23,3/22,4/19,5/18,6/15,7/14,8/11,9/10,12/13,16/17,20/21,
							24/25,28/29,30/59,35/36,41/42,47/48,53/54}{\path[edge,dashed] (\x) -- (\y);}
						\foreach \x/\y/\z in {-5/-4/10,-3/-6/11,-1/-8/12,1/-8/13,3/-6/14,5/-4/15,7/-4/16,9/-2/17,7/0/18,5/2/19,
							5/4/20,3/6/21,1/6/22,-1/6/23,-3/6/24,-5/4/25,-5/2/26,-7/0/27,-9/-2/28,-7/-4/29,
							-13/-6/61,-11/-8/31,-11/-10/62,-13/-12/63,-5/-12/34,-3/-14/66,3/-10/36,5/-12/37,7/-12/38,
							9/-10/39,11/-8/40,9/-6/41,11/0/42,13/2/43,13/4/44,15/6/74,9/12/78,7/6/47,5/8/48,
							5/12/79,1/14/81,-1/10/51,-3/10/52,-5/8/53,-7/6/54,-9/8/55,-11/6/56,-11/8/86,-13/10/87,-13/2/58,
							-15/0/60,-11/-14/64,-9/-14/65,
							15/0/73,13/-6/72,11/-10/71,13/-12/70,11/-14/69,9/-14/68,3/-14/67,15/8/75,-15/6/89,
							-15/8/88,13/10/76,11/8/77,-9/12/85,-5/12/84,-3/12/83,3/12/80,-1/14/82,
							-17/10/90,17/10/99,-15/12/91,15/12/98,-3/16/94,3/16/95,-17/14/92,17/14/97,-15/16/93,
							15/16/96,15/-7/102,17/-1/101,17/6/100,-15/-7/107,-17/-1/108,-17/6/109,-8/-16/106,
							8/-16/103,-3/-16/105,3/-16/104, -9/18/110, 9/18/119, -19/10/111,19/10/118,-19/4/112,
							19/4/117,-19/-3/113,19/-3/116,-5/-18/114,5/-18/115}{
							
							\node[vert] (\z) at (\x,\y){};
						} 
						
						\foreach \x/\y in {60/61,61/62,72/73,71/72,68/69,70/71,62/63,64/65,66/67,65/66,67/68,
							60/89,73/74,74/75,76/77,86/87,88/89,77/78,85/86,84/85,78/79,79/80,83/84,80/81,82/83,
							88/90,75/99,87/91,76/98,82/94,81/95,91/92,92/93,93/94,95/96,96/97,97/98,99/100,100/101,
							101/102,102/70,90/109,109/108,108/107,107/63,106/64,69/103,106/105,105/104,104/103,
							110/119,111/112,118/117,112/113,117/116,114/115}{\path[edge] (\x) -- (\y);}
						
						\foreach \x/\y in {61/62,71/72,65/66,67/68,60/89,73/74,77/78,85/86,79/80,83/84,88/90,87/91,75/99,76/98,
							82/94,81/95,92/93,96/97,102/70,100/101,109/108,107/63,106/64,69/103,105/104,
							112/113,117/116,114/115}{\draw [line width=2.5pt, line cap=round, dash pattern=on 0pt off 2\pgflinewidth]  (\x) -- (\y);}

						\foreach \x/\y in {58/60,61/31,62/32,43/73,40/72,39/71,63/64,33/65,38/68,69/70,34/66,37/67,57/89,44/74,
							87/88,56/86,45/77,75/76,55/85,46/78,52/84,49/79,51/83,50/80,81/82,98/99,90/91,94/95,
							93/110,96/119,92/111,97/118,100/117,109/112,108/113,101/116,105/114,104/115}{\path[edge,dashed] (\x) -- (\y);}
						
						\draw[edge,dashed] plot [smooth,tension=1] coordinates{(107) (-13,-14) (106)};
						\draw[edge,dashed] plot [smooth,tension=1] coordinates{(102) (13,-14) (103)};
						
						\draw[edge] plot [smooth,tension=1] coordinates{(110) (-16.5,16.5) (111)};
						\draw[line width=3pt, line cap=round, dash pattern=on 0pt off 2\pgflinewidth] plot [smooth,tension=1] coordinates{(110) (-16.5,16.5) (111)};
						
						\draw[edge] plot [smooth,tension=1] coordinates{(118) (16.5,16.5) (119)};
						\draw[line width=3pt, line cap=round, dash pattern=on 0pt off 2\pgflinewidth] plot [smooth,tension=1] coordinates{(118) (16.5,16.5) (119)};

						\draw[edge] plot [smooth,tension=1] coordinates{(113) (-15,-15) (114)};\draw[edge] plot [smooth,tension=1] coordinates{(115) (15,-15) (116)};
						\node[ver] at (0,-20){$(b)$};
						
					\end{scope}
					\begin{scope}[shift={(-11,0)}, scale=0.6]
						\foreach \x/\y/\z in {-5/0/0,-3/2/1,-1/4/2,1/4/3,3/2/4,5/0/5,3/-2/6,1/-4/7,-1/-4/8,-3/-2/9}
						{
							\node[vert] (\z) at (\x,\y){};
						} 
						
						\foreach \x/\y in {0/1,1/2,2/3,3/4,4/5,5/6,6/7,7/8,8/9,9/0}{\path[edge] (\x) -- (\y);}
						\foreach \x/\y in {0/1,2/3,4/5,6/7,8/9}{\draw [line width=2.5pt, line cap=round, dash pattern=on 0pt off 2\pgflinewidth]  (\x) -- (\y);}
						
						\foreach \x/\y/\z in {-5/-4/10,-3/-6/11,-1/-8/12,1/-8/13,3/-6/14,5/-4/15,7/-4/16,9/-2/17,7/0/18,5/2/19,
							5/4/20,3/6/21,1/6/22,-1/6/23,-3/6/24,-5/4/25,-5/2/26,-7/0/27,-9/-2/28,-7/-4/29,
							-9/-6/30,-11/-8/31,-9/-10/32,-7/-12/33,-5/-12/34,-3/-10/35,3/-10/36,5/-12/37,7/-12/38,
							9/-10/39,11/-8/40,9/-6/41,13/-2/42,13/2/43,13/4/44,13/8/45,9/8/46,7/6/47,5/8/48,
							3/10/49,1/10/50,-1/10/51,-3/10/52,-5/8/53,-7/6/54,-9/8/55,-13/8/56,-13/4/57,-13/2/58,
							-13/-2/59}
						{
							\node[vert] (\z) at (\x,\y){};
						} 
						
						\foreach \x/\y in {10/29,29/30,30/31,31/32,32/33,33/34,34/35,35/12,12/11,11/10,
							13/14,14/15,15/16,16/41,41/40,40/39,39/38,38/37,37/36,36/13,
							17/18,18/19,19/20,20/47,47/46,46/45,45/44,44/43,43/42,42/17,
							21/22,22/23,23/24,24/53,53/52,52/51,51/50,50/49,49/48,48/21,
							25/26,26/27,27/28,28/59,59/58,58/57,57/56,56/55,55/54,54/25}{\path[edge] (\x) -- (\y);}
						
						\foreach \x/\y in {11/10,35/12,33/34,31/32,29/30,
							14/15,16/41,40/39,38/37,36/13, 18/19,20/47,46/45,44/43,42/17,
							22/23,24/53,52/51,50/49,48/21, 26/27,28/59,58/57,56/55,54/25}{\draw [line width=2.5pt, line cap=round, dash pattern=on 0pt off 2\pgflinewidth]  (\x) -- (\y);}
						
						\foreach \x/\y in {0/27,1/26,2/23,3/22,4/19,5/18,6/15,7/14,8/11,9/10,12/13,16/17,20/21,
							24/25,28/29,30/59,35/36,41/42,47/48,53/54}{\path[edge,dashed] (\x) -- (\y);}

						\foreach \x/\y/\z in {-15/12/c_1,15/12/c_2,15/-14/c_3,-15/-14/c_4}
						{
							\node[vert] (\z) at (\x,\y){};
						} 
						\foreach \x/\y in {c_1/c_2,c_2/c_3,c_3/c_4,c_4/c_1}{\draw[edge,dotted] (\x) -- (\y);}
						
						\foreach \x/\y/\z in {-5/12/b_52, -3/12/b_51, 3/12/b_50, 5/12/b_49, 13/12/b_46, 
							15/10/b_45, 15/2/b_44, 15/0/b_43, 15/-4/b_40,3/-14/b_37, 9/-14/b_38, 13/-14/b_39,
							-3/-14/b_34,-9/-14/b_33, -13/-14/b_32, -15/2/b_57, -15/0/b_58, -15/-4/b_31, -13/12/b_55, -15/10/b_56}
						{
							\node[vert] (\z) at (\x,\y){};
						} 
						\foreach \x/\y in {52/b_52,51/b_51,50/b_50,49/b_49,46/b_46,45/b_45,44/b_44,43/b_43,
							40/b_40,37/b_37,38/b_38,39/b_39,34/b_34,33/b_33,32/b_32, 31/b_31,57/b_57,58/b_58, 55/b_55,56/b_56}{\draw[edge,dashed] (\x) -- (\y);}

						\draw [-{Stealth[scale=2]}] (0,12) -- (1,12); 
						\draw [-{Stealth[scale=2]}] (1,-14)--(0,-14); 
						\draw [-{Stealth[scale=2]}] (-15,-0.5)--(-15,0); 
						\draw [-{Stealth[scale=2]}] (-15,-1.5)--(-15,-1); 
						\draw [-{Stealth[scale=2]}] (15,-0.4)--(15,-0.9); 
						\draw [-{Stealth[scale=2]}] (15,-1.4)--(15,-1.9); 
						\node[ver] at (0,-16){$(a)$};
						
					\end{scope}	
				\end{tikzpicture}
				
\smallskip
				
\caption{$(a)$ Embedding on $\mathbb{RP}^2$ of gem representing $\mathbb{RP}^2$  of type $(4,6,10)$ and $(b)$ Embedding on $\mathbb{S}^2$ of gem representing $\mathbb{S}^2$  of type $(4,6,10)$.}\label{fig:2}
\end{figure}
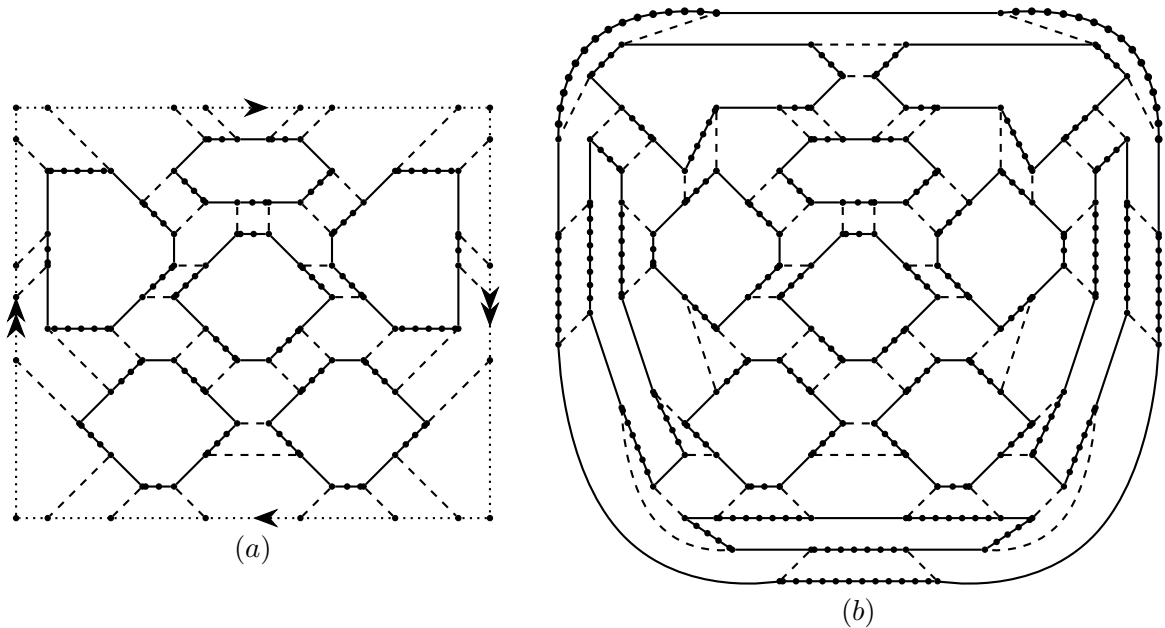

\bigskip
			
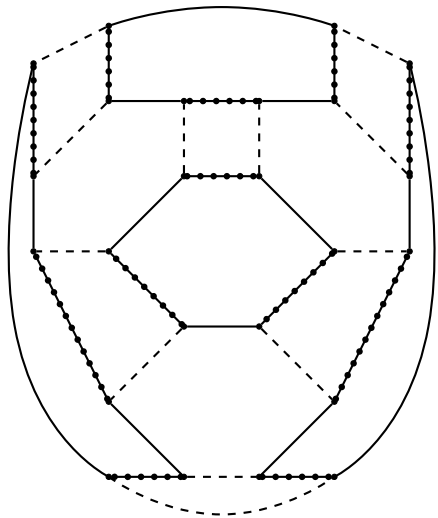
\begin{figure}[!h]
				\tikzstyle{ver}=[]
				\tikzstyle{vert}=[circle, draw, fill=black!100, inner sep=0pt, minimum width=2pt]
				\tikzstyle{vertii}=[circle, draw, fill=black!50, inner sep=0pt, minimum width=4pt]
				\tikzstyle{edge} = [draw,thick,-]
				\centering
				
				\begin{tikzpicture}[scale=0.5]

					\begin{scope}[shift={(0,0)}]
						\foreach \x/\y/\z in {-3/0/0,-1/2/1,1/2/2,3/0/3,1/-2/4,-1/-2/5,-3/-4/6,-1/-6/7,1/-6/8,3/-4/9,5/0/10,
							5/2/11,3/4/12,1/4/13,-1/4/14,-3/4/15,-5/2/16,-5/0/17,-3/-6/18,3/-6/19,5/5/20,3/6/21,
							-3/6/22,-5/5/23}
						{
							\node[vert] (\z) at (\x,\y){};
						} 
						
						\foreach \x/\y in {0/1,1/2,2/3,3/4,4/5,5/0,6/7,7/18,23/16,16/17,17/6,12/13,13/14,14/15,15/22,21/12,8/9,9/10,10/11,11/20,19/8}{\path[edge] (\x) -- (\y);}
						\foreach \x/\y in {1/2,3/4,5/0,16/23,6/17,7/18,8/19,9/10,11/20,13/14,15/22,12/21}{\draw [line width=2.5pt, line cap=round, dash pattern=on 0pt off 2\pgflinewidth]  (\x) -- (\y);}
						\foreach \x/\y in {0/17,1/14,2/13,3/10,4/9,5/6,7/8,11/12,15/16,22/23,20/21}{\path[edge,dashed] (\x) -- (\y);}
						
						\draw[edge,dashed] plot [smooth,tension=1] coordinates{(18) (0,-7) (19)};
						\draw[edge] plot [smooth,tension=1] coordinates{(19) (5.5,-2) (20)};
						\draw[edge] plot [smooth,tension=1] coordinates{(18) (-5.5,-2) (23)};
						\draw[edge] plot [smooth,tension=1] coordinates{(21) (0,6.5) (22)};
						\end{scope}
					\end{tikzpicture}
					
\bigskip
\caption{ Embedding on $\mathbb{S}^2$ of gem representing $\mathbb{S}^2$  of type $(6^2,4^1)$}\label{fig:no RP2}
\end{figure}
			
\newpage

\begin{figure}[!h]
			\tikzstyle{ver}=[]
			\tikzstyle{vert}=[circle, draw, fill=black!100, inner sep=0pt, minimum width=1pt]
			\tikzstyle{vertii}=[circle, draw, fill=black!50, inner sep=0pt, minimum width=4pt]
			\tikzstyle{edge} = [draw,thick,-]
			\centering
			
			\begin{tikzpicture}[scale=0.3]
				\begin{scope}[shift={(-14,0)}]
					\foreach \x/\y/\z in {-3/0/0,0/3/1,3/0/2,0/-3/3}
					{
						\node[vert] (\z) at (\x,\y){};
					} 
					
					\foreach \x/\y in {0/1,1/2,2/3,3/0}{\path[edge] (\x) -- (\y);}
					\foreach \x/\y in {0/1,2/3}{\draw [line width=2.5pt, line cap=round, dash pattern=on 0pt off 2\pgflinewidth]  (\x) -- (\y);}

					\foreach \x/\y/\z in {-6/6/c_1,6/6/c_2,6/-6/c_3,-6/-6/c_4}
					{
						\node[vert] (\z) at (\x,\y){};
					} 
					\foreach \x/\y in {c_1/c_2,c_2/c_3,c_3/c_4,c_4/c_1}{\draw[edge,dotted] (\x) -- (\y);}
					
					\foreach \x/\y/\z in {0/6/b_1,6/0/b_2,0/-6/b_3,-6/0/b_4}
					{
						\node[vert] (\z) at (\x,\y){};
					} 
					\foreach \x/\y in {0/b_4,1/b_1,2/b_2,3/b_3}{\path[edge,dashed] (\x) -- (\y);}
					\draw [-{Stealth[scale=2]}] (0,6) -- (0.5,6); 
					\draw [-{Stealth[scale=2]}] (0.2,-6)--(-0.2,-6); 
					\draw [-{Stealth[scale=2]}] (-6,0)--(-6,0.2); 
					\draw [-{Stealth[scale=2]}] (-6,0.6)--(-6,0.8); 
					\draw [-{Stealth[scale=2]}] (6,0.5)--(6,0); 
					\draw [-{Stealth[scale=2]}] (6,-0.3)--(6,-0.9); 
					\node[ver] at (0,-8){$(a)$};
				\end{scope}

				\begin{scope}[scale=1.5,shift={(9,0)}]
					\foreach \x/\y/\z in {-2/0/0,0/2/1,2/0/2,0/-2/3,0/-4/4,4/0/5,0/4/6,-4/0/7}
					{
						\node[vert] (\z) at (\x,\y){};
					} 
					
					\foreach \x/\y in {0/1,1/2,2/3,3/0,4/5,5/6,6/7,7/4}{\path[edge] (\x) -- (\y);}
					\foreach \x/\y in {0/1,2/3,4/5,6/7}{\draw [line width=2.5pt, line cap=round, dash pattern=on 0pt off 2\pgflinewidth]  (\x) -- (\y);}
					\foreach \x/\y in {0/7,1/6,2/5,3/4}{\path[edge,dashed] (\x) -- (\y);}

						\node[ver] at (0,-6){$(b)$};
						
					\end{scope}
					
					\begin{scope}[shift={(-12,-22)}]
						
						\foreach \x/\y/\z in {-3/1/0,-1/3/1,1/3/2,3/1/3,3/-1/4,1/-3/5,-1/-3/6,-3/-1/7}
						{
							\node[vert] (\z) at (\x,\y){};
						} 
						
						\foreach \x/\y in {0/1,1/2,2/3,3/4,4/5,5/6,6/7,7/0}{\path[edge] (\x) -- (\y);}
						\foreach \x/\y in {1/2,3/4,5/6,7/0}{\draw [line width=2.5pt, line cap=round, dash pattern=on 0pt off 2\pgflinewidth]  (\x) -- (\y);}

						\foreach \x/\y/\z in {-5/-1/8,-7/-3/9,-5/-5/10,-3/-5/11,3/-5/12,5/-5/13,7/-3/14,5/-1/15,5/1/16,7/3/17,
							5/5/18,3/5/19,-3/5/20,-5/5/21,-7/3/22,-5/1/23}
						{
							\node[vert] (\z) at (\x,\y){};} 
						\foreach \x/\y in {8/9,10/11,11/12,12/13,14/15,15/16,16/17,18/19,19/20,20/21,22/23,23/8}{\path[edge] (\x) -- (\y);}
						\foreach \x/\y in {11/12,15/16,19/20,23/8}{\draw [line width=2.5pt, line cap=round, dash pattern=on 0pt off 2\pgflinewidth]  (\x) -- (\y);}
						
						\foreach \x/\y/\z in {-9/3/b_1, -9/-3/b_2,-5/-7/b_3,5/-7/b_4,9/-3/b_5,9/3/b_6,5/7/b_7,-5/7/b_8,-9/7/c_1,-9/-7/c_2,9/-7/c_3,9/7/c_4}
						{
							\node[vert] (\z) at (\x,\y){};} 
						
						\foreach \x/\y in {b_1/22,b_2/9,b_3/10,b_4/13,b_5/14,b_6/17,b_7/18,b_8/21}{\path[edge] (\x) -- (\y);}
						\foreach \x/\y in {b_1/22,b_2/9,b_3/10,b_4/13,b_5/14,b_6/17,b_7/18,b_8/21}{\draw [line width=2.5pt, line cap=round, dash pattern=on 0pt off 2\pgflinewidth]  (\x) -- (\y);}
						\foreach \x/\y in {c_1/c_2,c_2/c_3,c_3/c_4,c_4/c_1}{\draw[edge,dotted] (\x) -- (\y);}
						\foreach \x/\y in {0/23,1/20,2/19,3/16,4/15,5/12,6/11,7/8,9/10,13/14,17/18,21/22}{\path[edge,dashed] (\x) -- (\y);}

						\draw [-{Stealth[scale=2]}] (0,7) -- (0.5,7); 
						\draw [-{Stealth[scale=2]}] (0.5,-7)--(0,-7); 
						\draw [-{Stealth[scale=2]}] (-9,0.1)--(-9,0.3); 
						\draw [-{Stealth[scale=2]}] (-9,0.7)--(-9,0.9); 
						\draw [-{Stealth[scale=2]}] (9,0.9)--(9,0.7); 
						\draw [-{Stealth[scale=2]}] (9,0.3)--(9,0.1); 
						\node[ver] at (0,-9){$(c)$};
					\end{scope}

					\begin{scope}[shift={(14,-22)}]
						\foreach \x/\y/\z in {-3/1/0,-1/3/1,1/3/2,3/1/3,3/-1/4,1/-3/5,-1/-3/6,-3/-1/7}
						{
							\node[vert] (\z) at (\x,\y){};
						} 
						
						\foreach \x/\y in {0/1,1/2,2/3,3/4,4/5,5/6,6/7,7/0}{\path[edge] (\x) -- (\y);}
						\foreach \x/\y in {1/2,3/4,5/6,7/0}{\draw [line width=2.5pt, line cap=round, dash pattern=on 0pt off 2\pgflinewidth]  (\x) -- (\y);}

						\foreach \x/\y/\z in {-5/-1/8,-7/-3/9,-5/-5/10,-3/-5/11,3/-5/12,5/-5/13,7/-3/14,5/-1/15,5/1/16,7/3/17,
							5/5/18,3/5/19,-3/5/20,-5/5/21,-7/3/22,-5/1/23}
						{
							\node[vert] (\z) at (\x,\y){};} 
						\foreach \x/\y in {8/9,10/11,11/12,12/13,14/15,15/16,16/17,18/19,19/20,20/21,22/23,23/8}{\path[edge] (\x) -- (\y);}
						\foreach \x/\y in {11/12,15/16,19/20,23/8}{\draw [line width=2.5pt, line cap=round, dash pattern=on 0pt off 2\pgflinewidth]  (\x) -- (\y);}
						
						\foreach \x/\y/\z in {-9/3/24,-11/1/25,-11/-1/26, -9/-3/27,-5/-7/28, -3/-7/29,3/-7/30,  5/-7/31,9/-3/32,11/-1/33,11/1/34,9/3/35,5/7/36,3/7/37,-3/7/38,-5/7/39}
						{
							\node[vert] (\z) at (\x,\y){};} 
						
						\foreach \x/\y in {10/28,28/29,29/30,30/31,31/13,14/32,32/33,33/34,34/35,35/17,
							18/36,36/37,37/38,38/39,39/21,22/24,24/25,25/26,26/27,27/9}{\path[edge] (\x) -- (\y);}
						\foreach \x/\y in {10/28,29/30,31/13,14/32,33/34,35/17, 18/36,37/38,39/21,22/24,25/26,27/9}{\draw [line width=2.5pt, line cap=round, dash pattern=on 0pt off 2\pgflinewidth]  (\x) -- (\y);}

						\foreach \x/\y/\z in {-13/1/40,-13/-1/41,  -3/-9/42,3/-9/43,  13/-1/44,13/1/45,3/9/46,-3/9/47}
						{
							\node[vert] (\z) at (\x,\y){};}
						
						\foreach \x/\y in {0/23,1/20,2/19,3/16,4/15,5/12,6/11,7/8,9/10,13/14,17/18,21/22,24/39,35/36,27/28,31/32,
							25/40,26/41,29/42,30/43,33/44,34/45,37/46,38/47}{\path[edge,dashed] (\x) -- (\y);}
						
						\foreach \x/\y in {40/41,42/43,44/45,46/47}{\path[edge] (\x) -- (\y);}
						\foreach \x/\y in {40/41,42/43,44/45,46/47}{\draw [line width=2.5pt, line cap=round, dash pattern=on 0pt off 2\pgflinewidth]  (\x) -- (\y);}
						\draw[edge] plot [smooth,tension=1] coordinates{(41) (-8,-6.7) (42)};
						\draw[edge] plot [smooth,tension=1] coordinates{(43) (8,-6.7) (44)};
						\draw[edge] plot [smooth,tension=1] coordinates{(45) (8,6.7) (46)};
						\draw[edge] plot [smooth,tension=1] coordinates{(47) (-8,6.7) (40)};

						\node[ver] at (0,-11){$(d)$};
					\end{scope}
					
					\begin{scope}[shift={(-12,-42)}]
						\foreach \x/\y/\z in {-5/1/0,-5/3/1,-3/5/2,-1/5/3,1/5/4,3/5/5,5/3/6,5/1/7,5/-1/8,5/-3/9,3/-5/10,1/-5/11,
							-1/-5/12,-3/-5/13,-5/-3/14,-5/-1/15}
						{
							\node[vert] (\z) at (\x,\y){};
						} 
						
						\foreach \x/\y/\z in {-0.6/5/u_1,0/5/u_2,0.6/5/u_3,
							-0.6/-5/u_1,0/-5/u_2,0.6/-5/u_3,
							5/-0.6/a_1,5/0/a_2,5/0.6/a_2,
							-5/-0.6/a_1,-5/0/a_2,-5/0.6/a_2}
						{
							\node[vertii] (\z) at (\x,\y){};
						} 
						
						\foreach \x/\y in {0/1,1/2,2/3,4/5,5/6,6/7,8/9,9/10,10/11,12/13,13/14,14/15}{\path[edge] (\x) -- (\y);}
						\foreach \x/\y in {1/2,5/6,9/10,13/14}{\draw [line width=2.5pt, line cap=round, dash pattern=on 0pt off 2\pgflinewidth]  (\x) -- (\y);}

						\foreach \x/\y/\z in {-7/7/c_1,7/7/c_2,7/-7/c_3,-7/-7/c_4}
						{
							\node[vert] (\z) at (\x,\y){};
						} 
						\foreach \x/\y in {c_1/c_2,c_2/c_3,c_3/c_4,c_4/c_1}{\draw[edge,dotted] (\x) -- (\y);}

						\foreach \x/\y/\z in {-7/1/b_0,-7/3/b_1,-3/7/b_2,-1/7/b_3,1/7/b_4,3/7/b_5,7/3/b_6,7/1/b_7,7/-1/b_8,7/-3/b_9,3/-7/b_10,1/-7/b_11,-1/-7/b_12,-3/-7/b_13,-7/-3/b_14,-7/-1/b_15}
						{
							\node[vert] (\z) at (\x,\y){};} 
						\foreach \x/\y in {0/b_0,1/b_1,2/b_2,3/b_3,4/b_4,5/b_5,6/b_6,7/b_7,8/b_8,9/b_9,10/b_10,11/b_11,12/b_12,13/b_13,14/b_14,15/b_15}{\path[edge,dashed] (\x) -- (\y);}
						
						\draw [-{Stealth[scale=2]}] (-0.2,7) -- (0.7,7); 
						\draw [-{Stealth[scale=2]}] (0.2,-7)--(-0.4,-7); 
						\draw [-{Stealth[scale=2]}] (-7,0)--(-7,0.2); 
						\draw [-{Stealth[scale=2]}] (-7,0.6)--(-7,0.8); 
						\draw [-{Stealth[scale=2]}] (7,0.5)--(7,0.3); 
						\draw [-{Stealth[scale=2]}] (7,-0.1)--(7,-0.3); 
						\node[ver] at (0,-9){$(e)$};
					\end{scope}
					
					\begin{scope}[shift={(14,-42)}]
						\foreach \x/\y/\z in {-5/1/0,-5/3/1,-3/5/2,-1/5/3,1/5/4,3/5/5,5/3/6,5/1/7,5/-1/8,5/-3/9,3/-5/10,1/-5/11,
							-1/-5/12,-3/-5/13,-5/-3/14,-5/-1/15}
						{
							\node[vert] (\z) at (\x,\y){};
						} 
						
						\foreach \x/\y/\z in {-0.6/5/u_1,0/5/u_2,0.6/5/u_3,-0.6/-5/u_1,0/-5/u_2,0.6/-5/u_3,
							5/-0.6/a_1,5/0/a_2,5/0.6/a_2,
							-5/-0.6/a_1,-5/0/a_2,-5/0.6/a_2}
						{
							\node[vertii] (\z) at (\x,\y){};
						}
						
						\foreach \x/\y/\z in {-0.6/7/u_1,0/7/u_2,0.6/7/u_3,-0.6/-7/u_1,0/-7/u_2,0.6/-7/u_3,
							7/-0.6/a_1,7/0/a_2,7/0.6/a_2,
							-7/-0.6/a_1,-7/0/a_2,-7/0.6/a_2}
						{
							\node[vertii] (\z) at (\x,\y){};
						}

						\foreach \x/\y in {0/1,2/3,4/5,6/7,8/9,10/11,12/13,14/15}{\path[edge] (\x) -- (\y);}

						\foreach \x/\y/\z in {-7/1/b_0,-7/3/b_1,-3/7/b_2,-1/7/b_3,1/7/b_4,3/7/b_5,7/3/b_6,7/1/b_7,7/-1/b_8,7/-3/b_9,3/-7/b_10,1/-7/b_11,-1/-7/b_12,-3/-7/b_13,-7/-3/b_14,-7/-1/b_15}
						{
							\node[vert] (\z) at (\x,\y){};} 
						\foreach \x/\y in {0/b_0,1/b_1,2/b_2,3/b_3,4/b_4,5/b_5,6/b_6,7/b_7,8/b_8,9/b_9,10/b_10,11/b_11,12/b_12,13/b_13,14/b_14,15/b_15}{\path[edge,dashed] (\x) -- (\y);}
						
						\draw[edge] plot [smooth,tension=1] coordinates{(1) (-4.3,4.3) (2)};
						\draw[line width=3pt, line cap=round, dash pattern=on 0pt off 2\pgflinewidth] plot [smooth,tension=1] coordinates{(1) (-4.3,4.3) (2)};
						
						\draw[edge] plot [smooth,tension=1] coordinates{(5) (4.3,4.3) (6)};
						\draw[line width=3pt, line cap=round, dash pattern=on 0pt off 2\pgflinewidth] plot [smooth,tension=1] coordinates{(5) (4.3,4.3) (6)};
						
						\draw[edge] plot [smooth,tension=1] coordinates{(9) (4.3,-4.3) (10)};
						\draw[line width=3pt, line cap=round, dash pattern=on 0pt off 2\pgflinewidth] plot [smooth,tension=1] coordinates{(9) (4.3,-4.3) (10)};
						
						\draw[edge] plot [smooth,tension=1] coordinates{(13) (-4.3,-4.3) (14)};
						\draw[line width=3pt, line cap=round, dash pattern=on 0pt off 2\pgflinewidth] plot [smooth,tension=1] coordinates{(13) (-4.3,-4.3) (14)};
						
						\draw[edge] plot [smooth,tension=1] coordinates{(b_1) (-5.5,5.5) (b_2)};
						\draw[line width=3pt, line cap=round, dash pattern=on 0pt off 2\pgflinewidth] plot [smooth,tension=1] coordinates{(b_1) (-5.5,5.5) (b_2)};
						
						\draw[edge] plot [smooth,tension=1] coordinates{(b_5) (5.5,5.5) (b_6)};
						\draw[line width=3pt, line cap=round, dash pattern=on 0pt off 2\pgflinewidth] plot [smooth,tension=1] coordinates{(b_5) (5.5,5.5) (b_6)};
						
						\draw[edge] plot [smooth,tension=1] coordinates{(b_9) (5.5,-5.5) (b_10)};
						\draw[line width=3pt, line cap=round, dash pattern=on 0pt off 2\pgflinewidth] plot [smooth,tension=1] coordinates{(b_9) (5.5,-5.5) (b_10)};
						
						\draw[edge] plot [smooth,tension=1] coordinates{(b_13) (-5.5,-5.5) (b_14)};
						\draw[line width=3pt, line cap=round, dash pattern=on 0pt off 2\pgflinewidth] plot [smooth,tension=1] coordinates{(b_13) (-5.5,-5.5) (b_14)};
						
						\foreach \x/\y in {b_2/b_3,b_4/b_5,b_6/b_7,b_8/b_9,b_10/b_11,b_12/b_13,b_14/b_15,b_0/b_1}{\path[edge] (\x) -- (\y);}
						
						\node[ver] at (0,-9){$(f)$};
					\end{scope}

				\end{tikzpicture}
\caption{$(a)$  Embedding on $\mathbb{RP}^2$ of gem representing $\mathbb{RP}^2$  of type $(4^3)$, $(b)$ Embedding on $\mathbb{S}^2$ of gem representing $\mathbb{S}^2$  of type $(4^3)$, $(c)$ Embedding on $\mathbb{RP}^2$ of gem representing $\mathbb{RP}^2$  of type  $(4,6,8)$, $(d)$  Embedding on $\mathbb{S}^2$ of gem representing $\mathbb{S}^2$  of type $(4,6,8)$, $(e)$ Embedding on $\mathbb{RP}^2$ of gem representing $\mathbb{RP}^2$  of type $(4^2,2p)$ and $(f)$ Embedding on $\mathbb{S}^2$ of gem representing $\mathbb{S}^2$  of type $(4^2,2p)$.}\label{fig:1)}
\end{figure}
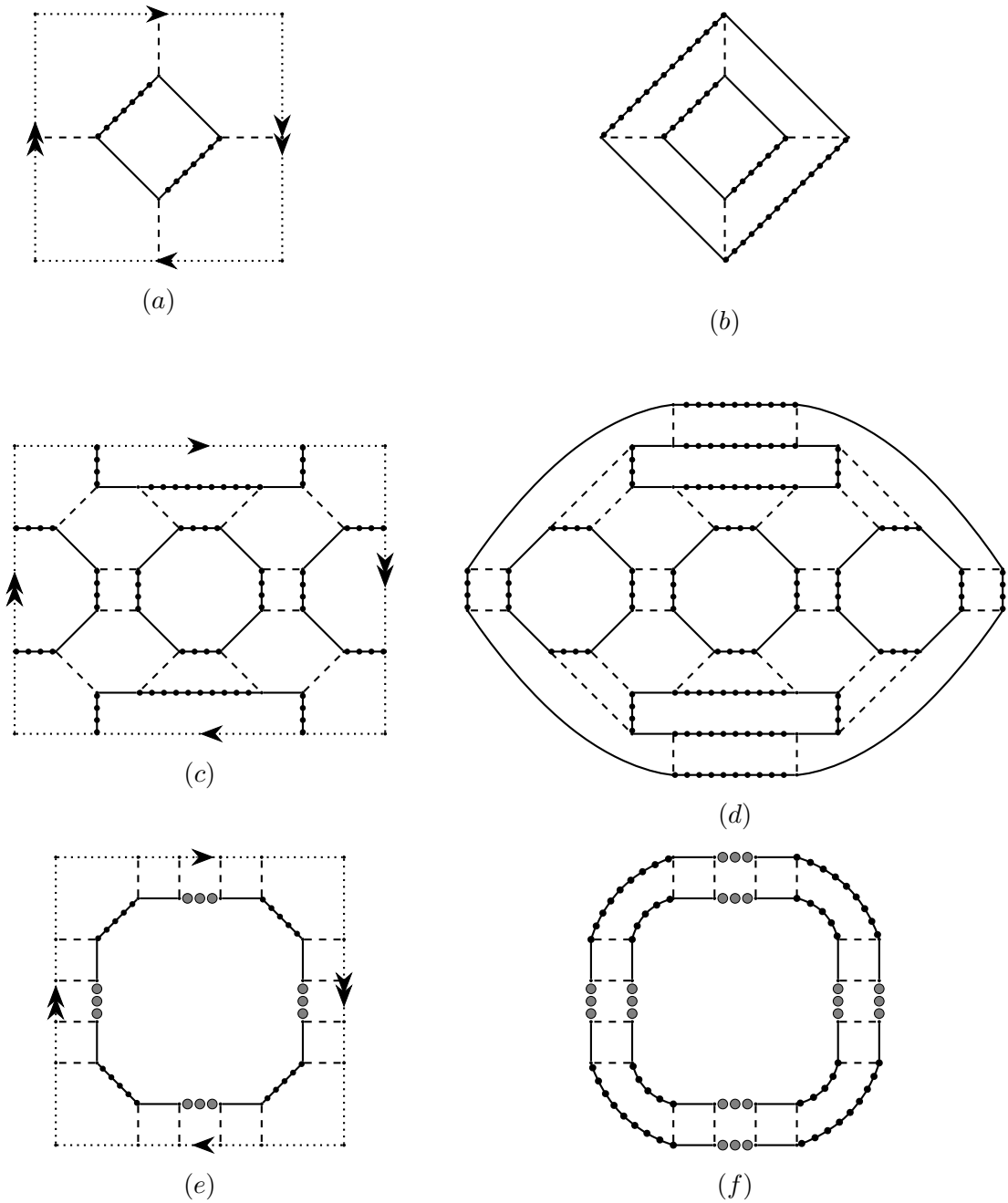

\newpage
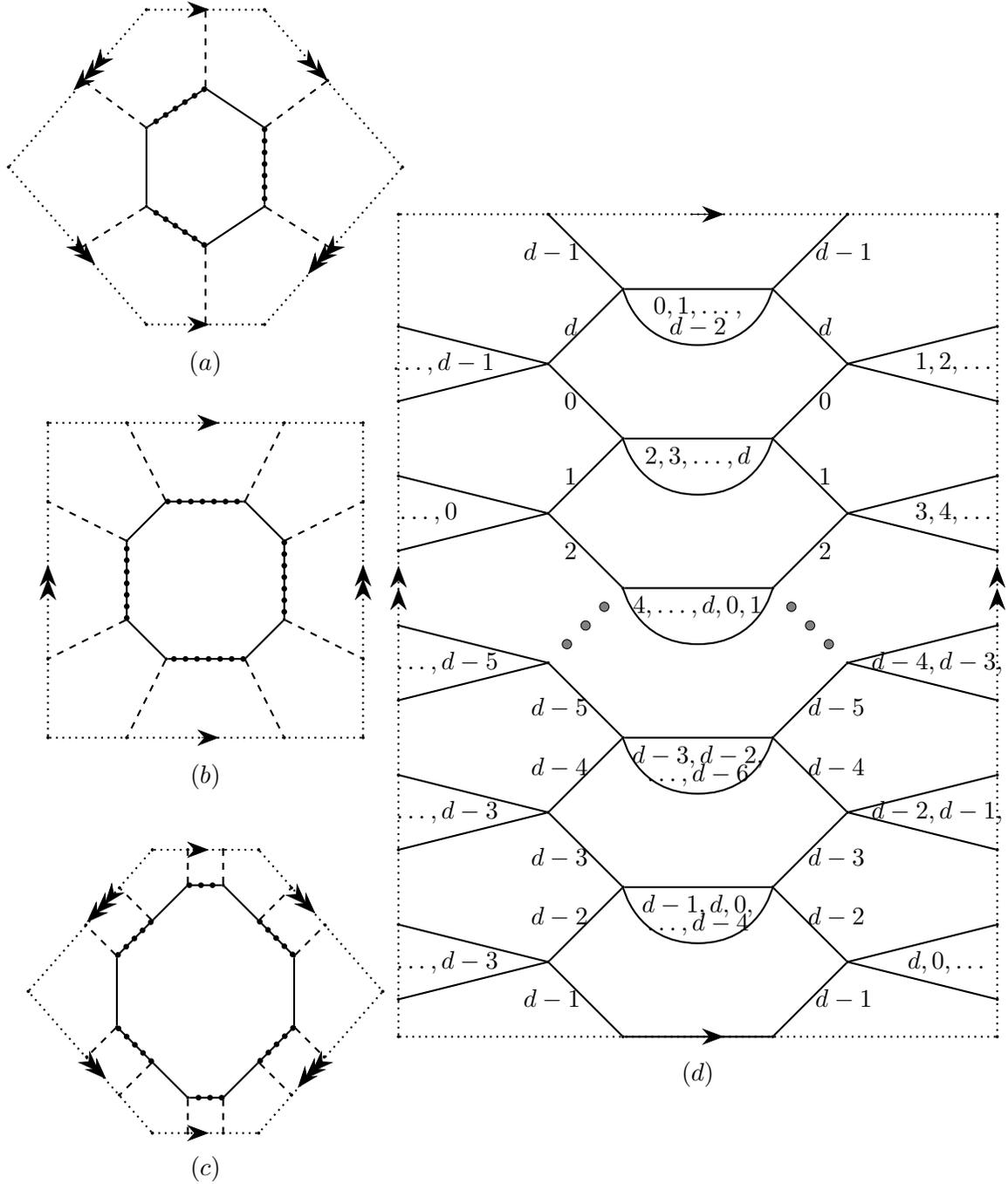
\begin{figure}[!h]
					\tikzstyle{ver}=[]
					\tikzstyle{vert}=[circle, draw, fill=black!100, inner sep=0pt, minimum width=1pt]
					\tikzstyle{vertii}=[circle, draw, fill=black!50, inner sep=0pt, minimum width=4pt]
					\tikzstyle{edge} = [draw,thick,-]
					\centering
					
					\begin{tikzpicture}[scale=0.3]

						\begin{scope}[shift={(-14,0)}]
							\foreach \x/\y/\z in {0/3/0,3/1/1,3/-3/2,0/-5/3,-3/-3/4,-3/1/5,-3/7/c_0,3/7/c_1,-3/-9/c_4,3/-9/c_3,10/-1/c_2,-10/-1/c_5,0/7/b_0,0/-9/b_3,6.4/-5.15/b_2,-6.4/-5.15/b_4,6.2/3.4/b_1,-6.2/3.4/b_5}
							{
								\node[vert] (\z) at (\x,\y){};
							} 
							
							\foreach \x/\y in {0/1,1/2,2/3,3/4,4/5,5/0}{\path[edge] (\x) -- (\y);}
							\foreach \x/\y in {1/2,3/4,0/5}{\draw [line width=2.5pt, line cap=round, dash pattern=on 0pt off 2\pgflinewidth]  (\x) -- (\y);}
							\foreach \x/\y in {0/b_0,3/b_3,2/b_2,4/b_4,5/b_5,1/b_1}{\path[edge,dashed] (\x) -- (\y);}
							\foreach \x/\y in {c_0/c_1,c_1/c_2,c_2/c_3,c_3/c_4,c_4/c_5,c_5/c_0}{\path[edge,dotted] (\x) -- (\y);}
							\draw [-{Stealth[scale=2]}] (-0.2,7) -- (0.2,7); 
							\draw [-{Stealth[scale=2]}] (-0.2,-9)--(0.2,-9); 
							\draw [-{Stealth[scale=2]}] (5.5,4.1)--(5.7,3.9); 
							\draw [-{Stealth[scale=2]}] (6,3.6)--(6.2,3.4);

							\draw [-{Stealth[scale=2]}] (-6.4,-5.15)--(-6,-5.55); 
							\draw [-{Stealth[scale=2]}] (-6,-5.55)--(-5.6,-5.95); 
							
							\draw [-{Stealth[scale=2]}] (-5.5,4.15)--(-5.7,3.9); 
							\draw [-{Stealth[scale=2]}] (-6,3.6)--(-6.2,3.4); 
							\draw [-{Stealth[scale=2]}] (-6.5,3.1)--(-6.7,2.9); 
							
							\draw [-{Stealth[scale=2]}] (6.4,-5.15)--(6,-5.55); 
							\draw [-{Stealth[scale=2]}] (6,-5.55)--(5.6,-5.95); 
							\draw [-{Stealth[scale=2]}] (5.6,-5.95)--(5.2,-6.45); 
							\node[ver] at (0,-11){$(a)$};
						\end{scope}
						\begin{scope}[shift={(-14,-22)}]
							\foreach \x/\y/\z in {-2/4/0,2/4/1,4/2/2,4/-2/3,2/-4/4,-2/-4/5,-4/-2/6,-4/2/7}
							{
								\node[vert] (\z) at (\x,\y){};
							}

							\foreach \x/\y/\z in {-8/8/c_1,8/8/c_2, 8/-8/c_3,-8/-8/c_4}
							{
								\node[vert] (\z) at (\x,\y){};
							} 
							\foreach \x/\y in {c_1/c_2,c_2/c_3,c_3/c_4,c_4/c_1}{\draw[edge,dotted] (\x) -- (\y);}
							
							\foreach \x/\y/\z in {-4/8/b_0,4/8/b_1,8/4/b_2,8/-4/b_3,-4/-8/b_5,4/-8/b_4,-8/-4/b_6,-8/4/b_7}
							{
								\node[vert] (\z) at (\x,\y){};
							} 
							
							\foreach \x/\y in {0/1,1/2,2/3,3/4,4/5,5/6,6/7,7/0}{\path[edge] (\x) -- (\y);}
							\foreach \x/\y in {0/1,2/3,4/5,6/7}{\draw [line width=2.5pt, line cap=round, dash pattern=on 0pt off 2\pgflinewidth]  (\x) -- (\y);}
							\foreach \x/\y in {0/b_0,1/b_1,2/b_2,3/b_3,4/b_4,5/b_5,6/b_6,7/b_7}{\path[edge,dashed] (\x) -- (\y);}

							\draw [-{Stealth[scale=2]}] (-0.2,8) -- (0.7,8); 
							
							\draw [-{Stealth[scale=2]}] (-0.2,-8) -- (0.7,-8); 
							
							\draw [-{Stealth[scale=2]}] (-8,-0.2) -- (-8,0.1); 
							\draw [-{Stealth[scale=2]}] (-8,0.5) -- (-8,0.8); 
							
							\draw [-{Stealth[scale=2]}] (8,-0.2) -- (8,0.1); 
							\draw [-{Stealth[scale=2]}] (8,0.5) -- (8,0.8); 
							\node[ver] at (0,-10){$(b)$};
						\end{scope}
						
						\begin{scope}[shift={(-14,-42)}, scale=0.9]
							\foreach \x/\y/\z in {-1/5/0,1/5/1,3/3/2,5/1/3,5/-3/4,3/-5/5,1/-7/6,-1/-7/7,-3/-5/8,-5/-3/9,-5/1/10,-3/3/11,
								-1/7/b_0,1/7/b_1,4.85/4.85/b_2,6.75/2.75/b_3,6.7/-4.7/b_4,4.85/-6.85/b_5,1/-9/b_6,-1/-9/b_7,-4.85/-6.85/b_8,-6.7/-4.7/b_9,-4.85/4.85/b_11,-6.75/2.75/b_10,-3/7/c_0,3/7/c_1,-3/-9/c_4,3/-9/c_3,10/-1/c_2,-10/-1/c_5}
							{
								\node[vert] (\z) at (\x,\y){};
							} 
							
							\foreach \x/\y in {0/1,1/2,2/3,3/4,4/5,5/6,6/7,7/8,8/9,9/10,10/11,11/0}{\path[edge] (\x) -- (\y);}
							\foreach \x/\y in {c_0/c_1,c_1/c_2,c_2/c_3,c_3/c_4,c_4/c_5,c_5/c_0}{\path[edge,dotted] (\x) -- (\y);}
							\foreach \x/\y in {0/1,2/3,4/5,6/7,8/9,10/11}{\draw [line width=2.5pt, line cap=round, dash pattern=on 0pt off 2\pgflinewidth]  (\x) -- (\y);}
							\foreach \x/\y in {0/b_0,1/b_1,2/b_2,3/b_3,4/b_4,5/b_5,6/b_6,7/b_7,8/b_8,9/b_9,10/b_10,11/b_11}{\path[edge,dashed] (\x) -- (\y);}
							
							\draw [-{Stealth[scale=2]}] (-0.2,7) -- (0.2,7); 
							\draw [-{Stealth[scale=2]}] (-0.2,-9)--(0.2,-9);  
							\draw [-{Stealth[scale=2]}] (5.5,4.1)--(5.7,3.9); 
							\draw [-{Stealth[scale=2]}] (6,3.6)--(6.2,3.4);

							\draw [-{Stealth[scale=2]}] (-6.4,-5.15)--(-6,-5.55); 
							\draw [-{Stealth[scale=2]}] (-6,-5.55)--(-5.6,-5.95); 
							
							\draw [-{Stealth[scale=2]}] (-5.5,4.15)--(-5.7,3.9); 
							\draw [-{Stealth[scale=2]}] (-6,3.6)--(-6.2,3.4); 
							\draw [-{Stealth[scale=2]}] (-6.5,3.1)--(-6.7,2.9); 
							
							\draw [-{Stealth[scale=2]}] (6.4,-5.15)--(6,-5.55); 
							\draw [-{Stealth[scale=2]}] (6,-5.55)--(5.6,-5.95); 
							\draw [-{Stealth[scale=2]}] (5.6,-5.95)--(5.2,-6.45); 
							\node[ver] at (0,-11){$(c)$};
						\end{scope}
						
						\begin{scope}[shift={(11,-30)}, scale=1.9]
							\foreach \x/\y/\z in {-2/12/0,2/12/1,-4/10/2,4/10/3,-2/8/4,2/8/5,-4/6/6,4/6/7,-2/4/8,2/4/9,-4/2/10,4/2/11,
								-2/0/12,2/0/13,-4/-2/14,4/-2/15,-2/-4/16,2/-4/17,-4/-6/18,4/-6/19,-2/-8/20,2/-8/21,-4/14/0''',4/14/1'''}
							{
								\node[vert] (\z) at (\x,\y){};
							} 
							
							\foreach \x/\y/\z in {-3/3/a_1,-2.5/3.5/a_2,-3.5/2.5/a_3,
								3/3/a_1,2.5/3.5/a_2,3.5/2.5/a_3}
							{
								\node[vertii] (\z) at (\x,\y){};
							} 
							
							\foreach \x/\y/\z in {-8/11/2',-8/9/2'',-8/7/6',-8/5/6'',-8/3/10',-8/1/10'',
								-8/-1/14',-8/-3/14'',-8/-5/18',-8/-7/18'',
								8/11/3',8/9/3'',8/7/7',8/5/7'',8/3/11',8/1/11'',8/-1/15',8/-3/15'',
								8/-5/19',8/-7/19''}
							{
								\node[vert] (\z) at (\x,\y){};
							}

							\foreach \x/\y/\z in {-8/14/c_1,8/14/c_2,8/-8/c_3,-8/-8/c_4}
							{
								\node[vert] (\z) at (\x,\y){};
							} 
							\foreach \x/\y in {c_1/c_2,c_2/c_3,c_3/c_4,c_4/c_1}{\draw[edge,dotted] (\x) -- (\y);}

							\foreach \x/\y in {0/1,1/3,3/5,5/4,4/2,2/0,5/7,7/9,9/8,8/6,6/4,11/13,13/12,12/10,15/17,17/16,16/14,
								17/19,19/21,21/20,20/18,18/16,1/1''',0/0''',12/14,13/15}{\path[edge] (\x) -- (\y);}
							
							\foreach \x/\y in {2/2',2/2'',3/3',3/3'',6/6',6/6'',7/7',7/7'',
								10/10',10/10'',11/11',11/11'',14/14',14/14'',15/15',15/15'',18/18',18/18'',19/19',19/19''}{\path[edge] (\x) -- (\y);}

							\draw[edge] plot [smooth,tension=1.5] coordinates{(0) (0,10.5) (1)};
							\draw[edge] plot [smooth,tension=1.5] coordinates{(4) (0,6.5) (5)};
							\draw[edge] plot [smooth,tension=1.5] coordinates{(8) (0,2.5) (9)};
							\draw[edge] plot [smooth,tension=1.5] coordinates{(12) (0,-1.5) (13)};
							\draw[edge] plot [smooth,tension=1.5] coordinates{(16) (0,-5.5) (17)};

							\draw [-{Stealth[scale=2]}] (-0.2,14) -- (0.7,14); 
							\draw [-{Stealth[scale=2]}] (-0.2,-8) -- (0.7,-8); 
							
							\draw [-{Stealth[scale=2]}] (-8,4.4) -- (-8,4.6); 
							\draw [-{Stealth[scale=2]}] (-8,3.8) -- (-8,4); 
							
							\draw [-{Stealth[scale=2]}] (8,4.4) -- (8,4.6); 
							\draw [-{Stealth[scale=2]}] (8,3.8) -- (8,4); 
							
							\node[ver](a1) at (0,11.5){{$0,1,\dots,$}};
							\node[ver] (a2) at (0,11){$ d-2$};
							
							\node[ver](a1) at (0,7.5){{$2,3,\dots,d$}};
							
							\node[ver](a1) at (0,3.5){{$4,\dots,d,0,1$}};
							
							\node[ver](a1) at (0,-4.5){{$d-1,d,0,$}};
							\node[ver] (a2) at (0,-5){$\dots,d-4$};
							
							\node[ver](a1) at (0,-0.5){{$d-3,d-2,$}};
							\node[ver] (a2) at (0,-1){$\dots,d-6$};
							
							\node[ver](a1) at (-3.9,13){{$d-1$}};
							\node[ver](a1) at (3.9,13){{$d-1$}};
							\node[ver](a1) at (-3.4,11){{$d$}};
							\node[ver](a1) at (3.4,11){{$d$}};
							\node[ver](a1) at (-3.4,9){{$0$}};
							\node[ver](a1) at (3.4,9){{$0$}};
							\node[ver](a1) at (-3.4,7){{$1$}};
							\node[ver](a1) at (3.4,7){{$1$}};
							\node[ver](a1) at (-3.4,5){{$2$}};
							\node[ver](a1) at (3.4,5){{$2$}};
							
							\node[ver](a1) at (-3.7,0.8){{$d-5$}};
							\node[ver](a1) at (3.7,0.8){{$d-5$}};
							\node[ver](a1) at (-3.7,-0.8){{$d-4$}};
							\node[ver](a1) at (3.7,-0.8){{$d-4$}};
							\node[ver](a1) at (-3.7,-3.2){{$d-3$}};
							\node[ver](a1) at (3.7,-3.2){{$d-3$}};
							\node[ver](a1) at (-3.7,-4.8){{$d-2$}};
							\node[ver](a1) at (3.7,-4.8){{$d-2$}};
							\node[ver](a1) at (-3.9,-7){{$d-1$}};
							\node[ver](a1) at (3.9,-7){{$d-1$}};
							
							\node[ver](a1) at (6.9,10){{$1,2,\dots$}};
							\node[ver](a1) at (-6.8,10){{$\dots,d-1$}};
							\node[ver](a1) at (6.9,6){{$3,4,\dots$}};
							\node[ver](a1) at (-7.2,6){{$\dots,0$}};
							
							\node[ver](a1) at (6.4,2){{$d-4,d-3,$}};
							\node[ver](a1) at (-6.7,2){{$\dots,d-5$}};
							\node[ver](a1) at (6.4,-2){{$d-2,d-1,$}};
							\node[ver](a1) at (-6.7,-2){{$\dots,d-3$}};
							\node[ver](a1) at (6.7,-6){{$d,0,\dots$}};
							\node[ver](a1) at (-6.7,-6){{$\dots,d-3$}};
							\node[ver] at (0,-9){$(d)$};
						\end{scope}
					\end{tikzpicture}

\caption{$(a)$ Embedding on $\mathbb{S}^1\times \mathbb{S}^1$ of gem representing $\mathbb{S}^1\times \mathbb{S}^1$  of type $(6^3)$,  $(b)$Embedding on $\mathbb{S}^1\times \mathbb{S}^1$ of gem representing $\mathbb{S}^1\times \mathbb{S}^1$  of type $(4^1,8^2)$, $(c)$ Embedding on $\mathbb{S}^1\times \mathbb{S}^1$ of gem representing $\mathbb{S}^1\times \mathbb{S}^1$  of type $(4,6,12)$ and $(d)$ Embedding on $\mathbb{S}^1\times \mathbb{S}^1$ of gem representing $\mathbb{S}^{d-1}\times \mathbb{S}^1$  of type $(2^{d-2},6^3)$.}\label{fig:3)}
\end{figure}

\noindent {\bf Acknowledgement:} The first author is supported by the Mathematical Research Impact Centric Support (MATRICS) Research Grant (MTR/2022/000036) by SERB (India). The authors are thankful to the reviewer for the simplified proof of Lemma \ref{lemma:(6^2,4)} and for the comments that improved the presentation of the paper in its current form.
				
\medskip

{\footnotesize

\end{document}